\documentclass[11pt]{amsart}

\usepackage{amsmath,graphicx,amssymb,amsthm,amstext}
\usepackage{caption}
\usepackage{subcaption}
\usepackage{tikz}
\usepackage[ruled]{algorithm2e}
\usepackage{listings}
\usepackage{hyperref}

\urlstyle{same}

\newcommand*{\QED}{\hfill\ensuremath{\blacksquare}}

\newtheorem{theorem}{Theorem}[section]
\newtheorem{lemma}[theorem]{Lemma}
\newtheorem{corollary}[theorem]{Corollary}

\newtheorem{fact}[theorem]{Fact}
\newtheorem{observation}[theorem]{Observation}

\theoremstyle{definition}
\newtheorem*{definition}{Definition}

\theoremstyle{remark}

\title{A  $(5,5)$-coloring of $K_n$ with few colors}
\author{Alex Cameron}
\address[Alex Cameron]{Department of Mathematics, Statistics, and Computer Science, University of Illinois at Chicago}
\email{acamer4@uic.edu}
\author{Emily Heath}
\address[Emily Heath]{Department of Mathematics, University of Illinois at Urbana-Champaign}
\email{eheath3@illinois.edu}
\date{}

\begin{document}
\maketitle

\begin{abstract}
For fixed integers $p$ and $q$, let $f(n,p,q)$ denote the minimum number of colors needed to color all of the edges of the complete graph $K_n$ such that no clique of $p$ vertices spans fewer than $q$ distinct colors. Any edge-coloring with this property is known as a $(p,q)$-coloring. We construct an explicit  $(5,5)$-coloring that shows that $f(n,5,5) \leq n^{1/3 + o(1)}$ as $n \rightarrow \infty$. This improves upon the best known probabilistic upper bound of $O\left(n^{1/2}\right)$ given by Erd\H{o}s and Gy{\'a}rf{\'a}s, and comes close to matching the best known lower bound $\Omega\left(n^{1/3}\right)$.
\end{abstract}

\section{Introduction}

Let $K_n$ denote the complete graph on $n$ vertices. Fix positive integers $p$ and $q$ such that $1 \leq q \leq {p \choose 2}$. A $(p,q)$-coloring of $K_n$ is any edge-coloring such that every copy of $K_p$ contains edges of at least $q$ distinct colors. Let $f(n,p,q)$ denote the minimum number of colors needed to give a $(p,q)$-coloring of $K_n$.

This function generalizes the classical Ramsey problem. The diagonal Ramsey number $r_k(p)$ is the minimum number of vertices $n$ for which any edge-coloring of $K_n$ with at most $k$ colors will contain a monochromatic copy of $K_p$. So $r_k(p) = n$ implies that $f(n-1,p,2) \leq k$ and $f(n,p,2) \geq k+1$. Similarly, $f(n,p,2)=k$ implies that $r_k(p) > n$ and $r_{k-1}(p) \leq n.$ Therefore, determining $f(n,p,2)$ is equivalent to determining $r_k(p)$ which is well known to be very difficult in general.

Paul Erd\H{o}s originally introduced the function $f(n,p,q)$ in 1981 \cite{erdos1981}, but it was not studied systematically until 1997 when Erd\H{o}s and Andr{\'a}s Gy{\'a}rf{\'a}s \cite{erdos1997} looked at the growth rate of $f(n,p,q)$ as $n \rightarrow \infty$ for fixed values of $p$ and $q$. In particular, they determined the threshold values for $q$ as a function of $p$ for which $f(n,p,q)$ becomes linear in $n$, quadratic in $n$, and asymptotically equivalent to ${n \choose 2}$. They also used the Lov{\'a}s Local Lemma to give a general upper bound, \[f(n,p,q) = O\left(n^{\frac{p-2}{1-q+{p \choose 2}}}\right).\]

\subsection{Determining $f(n,p,p-1) = n^{o(1)}$}

One of the main questions left open by Erd\H{o}s and Gy{\'a}rf{\'a}s \cite{erdos1997} was the determination of a threshold value of $q$ in terms of $p$ for which the function $f(n,p,q)$ becomes polynomial in $n$. They point out a simple induction argument which shows that \[f(n,p,p) \geq n^{1/(p-2)} - 1,\] but could not determine if $f(n,p,p-1) = n^{o(1)}$ even when $p=4$, a problem they called ``the most annoying" of all the small cases.

In 1998, Dhruv Mubayi \cite{mubayi1998note} verified that this is indeed the case when $p=4$ by giving an explicit $(4,3)$-coloring of $K_n$ to show $f(n,4,3)\leq e^{O\left(\sqrt{\log n}\right)}$. Dennis Eichhorn and Mubayi \cite{eichhorn2000note} later used a slight variation of this construction to show that $f(n,p,2\left\lceil \log p\right\rceil-2) \leq e^{O \left(\sqrt{\log n}\right)}$ for all $p \geq 5$ as well. In particular, this showed that $f(n,5,4)$ is also subpolynomial.

Recently, this problem was solved in general when Conlon, Fox, Lee, and Sudakov \cite{CFLS}  provided an explicit coloring which showed that \[f(n,p,p-1) \leq 2^{16p(\log n)^{1-1/(p-2)}\log \log n}.\] This construction is a generalization of the original $(4,3)$-coloring given by Mubayi \cite{mubayi1998note}, and we will use a simplified version of it as part of our $(5,5)$-coloring.

\subsection{Determining $f(n,p,p)$}

As previously stated, we know in general that $f(n,p,p) \geq \Omega \left( n^{1/(p-2)} \right)$. However, the local lemma gives the best general upper bound, \[f(n,p,p) \leq O \left( n^{2/(p-1)} \right).\] Only for $p=3,4$ do we know of a better upper bound.

A $(3,3)$-coloring is equivalent to a proper edge coloring, one in which no two incident edges can have the same color. Therefore, it is well known that \[f(n,3,3) = \left\{ \begin{array}{ll}
n & \quad n \text{ is odd}\\
n-1 & \quad n \text{ is even}
\end{array} \right. \]

In 2004, Mubayi \cite{mubayi2004} provided an explicit $(4,4)$-coloring of $K_n$ with only $n^{1/2}e^{O(\sqrt{\log n})}$ colors. This closed the gap for $p=4$ to \[n^{1/2}-1 \leq f(n,4,4) \leq n^{1/2 + o(1)}.\] His construction was the product of two colorings. The first was his earlier $(4,3)$-coloring which used $n^{o(1)}$ colors. The second was an ``algebraic" coloring that assigned to each vertex a vector from a two-dimensional vector space over a finite field, and then colored each edge with an element from the base field, giving $n^{1/2}$ colors. Some complicating factors needed to be addressed by splitting each of these colors a constant number of times so that ultimately the algebraic part of his coloring used only $O(n^{1/2})$ colors.

One such complications was the need to avoid what Mubayi called a ``striped $K_4$," four vertices with three distinct edge colors where each color is a matching. Interestingly, this particular arrangement can actually be avoided with only $2^{O(\sqrt{\log n})}$ colors as we will show in Section~\ref{modCFLS}.

\subsection{Summary of the $(5,5)$-coloring}

Our $(5,5)$-coloring extends Mubayi's idea of combining a small $(p,p-1)$-coloring with an algebraic coloring to obtain the following result.

\begin{theorem}
As $n \rightarrow \infty$, \[f(n,5,5) \leq n^{1/3}2^{O \left(\sqrt{\log n}\log \log n \right)}.\]
\end{theorem}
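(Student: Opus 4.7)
The plan is to imitate Mubayi's product-coloring strategy for $f(n,4,4)$, combining a sparse $(5,4)$-coloring $c_1$ with an algebraic coloring $c_2$ based on $3$-dimensional vectors. Every edge $e$ will be assigned the ordered pair $c(e) = (c_1(e), c_2(e))$, so the total number of colors is the product of the two palettes. For $c_1$, I would take a modified version of the Conlon--Fox--Lee--Sudakov construction (the one produced in Section~\ref{modCFLS}), yielding a $(5,4)$-coloring of $K_n$ in $2^{O(\sqrt{\log n}\log\log n)}$ colors and, crucially, forbidding a specific list of ``striped'' $K_5$ configurations---the natural analogue of Mubayi's striped $K_4$. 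For $c_2$, I would identify the vertex set with $\mathbb{F}_q^3$ for a prime power $q \approx n^{1/3}$ and color edges by a Cayley-type rule $c_2(uv) = \phi(x_u - x_v)$ for a suitable map $\phi: \mathbb{F}_q^3 \to \mathbb{F}_q$, possibly split by a bounded number of auxiliary invariants.

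The verification reduces to showing that every copy of $K_5$ receives at least five distinct $(c_1,c_2)$-pairs. Because $c_1$ alone is a $(5,4)$-coloring, any $K_5$ already sees at least four distinct $c_1$-colors, so the only case to treat is a $K_5$ on which $c_1$ attains exactly four colors. In that case, the ten edges partition into four $c_1$-monochromatic classes with at least one class of size at least two, and it suffices to find a single $c_1$-class on which $c_2$ takes at least two values. The first step is to enumerate, using the striped-$K_5$-avoidance property built into $c_1$, the possible partition types of such a four-colored $K_5$: the most symmetric configurations should be ruled out directly, leaving a short list of types in which the multi-edge $c_1$-classes are paths, stars, triangles, or small matchings.

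The second step is to check, type by type, that the algebraic coloring $c_2$ cannot be constant on any such multi-edge $c_1$-class. A $c_2$-monochromatic sub-configuration of this kind would translate into a system of equations of the form $\phi(x_{u_i}-x_{v_i}) = \phi(x_{u_j}-x_{v_j})$ which, by the linear-algebraic structure of $\mathbb{F}_q^3$, forces a rigid affine relation among the five vectors involved; such relations can be excluded either directly, by the choice of $\phi$, or by splitting its fibres into a constant number of sub-colors. The final palette size is then $O(q) \cdot 2^{O(\sqrt{\log n}\log\log n)} = n^{1/3} \cdot 2^{O(\sqrt{\log n}\log\log n)}$, as required.

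The main obstacle will be the delicate interaction between $c_1$ and $c_2$: every four-colored $K_5$ configuration that survives inside $c_1$ must be breakable by $c_2$, and conversely every configuration that $c_2$ cannot handle must already be forbidden by $c_1$. Forbidding too many patterns in $c_1$ destroys the $2^{O(\sqrt{\log n}\log\log n)}$ bound, while being too permissive leaves algebraic obstructions requiring more than $O(n^{1/3})$ colors in $c_2$. I expect this type-by-type case analysis, together with careful accounting of the constants hidden in each color-splitting step, to be the technical heart of the proof.
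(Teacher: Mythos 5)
Your high-level plan is the same as the paper's: take the product of a cheap CFLS-style coloring with an algebraic coloring on $\mathbb{F}_q^3$, $q\approx n^{1/3}$, enumerate the few $\le 4$-colored $K_5$ patterns the first coloring cannot kill, and destroy those with the algebraic part. But the algebraic ingredient you propose --- a Cayley-type rule $c_2(uv)=\phi(x_u-x_v)$ --- is where the argument has a genuine gap, and it is precisely the technical heart of the problem. With a difference-based coloring the fibres of $\phi$ are translation-invariant: a monochromatic star at apex $a$ says only that $x_b,x_c,x_d,x_e$ lie in a translate $x_a+F$ of a fixed fibre $F$, and the constraint coming from a second apex is a translate of another fibre. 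Since each fibre must have size about $q^2$ to keep the palette at $q$ colors, two such translates intersect in a large set with no useful structure unless $\phi$ is (essentially) linear, and a linear $\phi$ is far too weak --- for instance the star-over-monochromatic-$C_4$ pattern (Figure~\ref{fig:G}) is then realized by any four points in a common level plane of $\phi$ joined to an arbitrary apex, with the cycle edges all getting color $0$. The paper's Modified Inner Product coloring of Section~\ref{MIP} is not a cosmetic variant of this: coloring by $x\cdot y$ makes the monochromatic neighborhood of $a$ lie in a plane $\{x: a\cdot x=\alpha\}$ that \emph{depends on the apex}, which is what drives Lemma~\ref{main} (two linearly independent apexes force $c,d,e$ onto an affine line, where Lemma~\ref{collinear} says the coloring is proper). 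Your assertion that monochromatic sub-configurations ``force a rigid affine relation'' which ``can be excluded either directly, by the choice of $\phi$, or by splitting its fibres into a constant number of sub-colors'' is exactly the statement that needs a construction and a proof; the paper needs the UP/DOWN/ZERO/DOT typing, the linear-(in)dependence bit, the order-based Lemma~\ref{order}, and an additional four-valued coloring $\chi_2$ built from a bipartition of orthogonal non-isotropic vectors (used via isotropy and Fact~\ref{funfact}) to handle Figure~\ref{fig:G}, and even then the verification is a long case analysis in Section~\ref{elimination}. None of that is supplied or replaced by your outline.

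Two smaller points. First, you assume the Section~\ref{modCFLS} coloring is a $(5,4)$-coloring; the paper never proves (or needs) that. Instead it proves that specific small patterns are forbidden and then enumerates by computer all $\le 4$-colorings of $K_5$ avoiding them, obtaining the three patterns of Figure~\ref{badguys}; your reduction ``a four-colored $K_5$ is fine once some repeated $c_1$-class gets two $c_2$-values'' is valid, but if you instead import the genuine CFLS $(5,4)$-coloring for $p=5$ you pay $2^{O((\log n)^{2/3}\log\log n)}$ colors, which still gives $n^{1/3+o(1)}$ but not the stated bound $n^{1/3}2^{O(\sqrt{\log n}\log\log n)}$. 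Second, your final accounting assumes only $O(1)$ splittings of the algebraic colors; that is exactly what must be demonstrated for whatever $\phi$ you choose, and it is not automatic.
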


We begin in Section~\ref{modCFLS} by considering a particular instance of the general $(p,p-1)$-coloring of Conlon, Fox, Lee, and Sudakov \cite{CFLS} which we will refer to as the CFLS coloring. We show that with few colors this construction avoids certain ``bad" configurations. We then modify it slightly so that it also avoids the striped $K_4$ configuration. By forbidding these specific configurations, we are able to show that there are only three possible edge-colorings of $K_5$ (up to isomorphism) with at most four colors that could still occur with this modified CFLS coloring.

In Section~\ref{MIP}, we define an algebraic coloring which we call the Modified Inner Product (MIP) coloring. Under this construction, each vertex is associated with a vector in a three-dimensional space over a finite field. As in Mubayi's construction \cite{mubayi2004}, each edge is colored with a specific element in the base field. Some slight modifications are needed for special cases, but these will only split each color a constant number of times, ultimately giving $O \left( n^{1/3} \right)$ colors used in the MIP construction.

In Section~\ref{elimination}, we will take the product of these two colorings to get a construction that uses $n^{1/3 + o(1)}$ colors. We will show that under this combined edge-coloring, none of the three remaining configurations can occur.

\section{The CFLS coloring}
\label{modCFLS}

We will not define the CFLS \cite{CFLS} coloring in full generality since only a simple case is needed. We borrow part of the notation used in \cite{CFLS}, but change it somewhat for clarity in this particular instance. Let $n = 2^{\beta^2}$ for some positive integer $\beta$. Associate each vertex of $K_n$ with a unique binary string of length $\beta^2$. That is, we may assume that our vertex set is \[V = \{0,1\}^{\beta^2}.\] For any vertex $v \in V$, let $v^{(i)}$ denote the $i$th block of bits of length $\beta$ in $v$ so that \[v = (v^{(1)},\ldots,v^{(\beta)})\] where each $v^{(i)} \in \{0,1\}^{\beta}$.

Between two vertices $x,y \in V$, the CFLS coloring is defined by \[\varphi_1(x,y) = \left( \left( i, \{x^{(i)},y^{(i)}\}\right), i_1, \ldots, i_{\beta}\right)\] where $i$ is the first index for which $x^{(i)} \neq y^{(i)}$, and for each $k=1,\ldots, \beta$, $i_k = 0$ if $x^{(k)} = y^{(k)}$ and otherwise is the first index at which a bit of $x^{(k)}$ differs from the corresponding bit in $y^{(k)}$.

For convenience, when discussing any edge color $\alpha$, we will let $\alpha_0$ denote the first coordinate of the color (of the form $(i,\{x^{(i)},y^{(i)}\})$) and let $\alpha_k$ denote the index of the first bit difference of the $k$th block for $k=1,\ldots,\beta$.
Furthermore, throughout this section, we will say that two vertices $x$ and $y$ {\em agree at} $i$ if $x^{(i)}=y^{(i)}$ and that $x$ and $y$ {\em differ at} $i$ if $x^{(i)}\neq y^{(i)}$.

\subsection{Avoided configurations}

We will show through the following series of lemmas that the CFLS coloring avoids certain specified arrangements of edge colors.

\begin{lemma}
\label{odd}
The CFLS coloring forbids monochromatic odd cycles.
\end{lemma}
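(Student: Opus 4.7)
The plan is to exploit just the first coordinate of the CFLS color, namely the pair $(i,\{x^{(i)},y^{(i)}\})$, which records the earliest block at which the two endpoints of an edge disagree together with the unordered pair of values occurring there. Since two endpoints of an edge must actually differ at block $i$, this unordered pair automatically has two distinct elements. A monochromatic cycle then forces the $i$-th blocks of its vertices to alternate between these two values, which is impossible around an odd cycle.

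More concretely, I would suppose for contradiction that $v_0v_1\cdots v_{m-1}v_0$ is a monochromatic cycle of odd length $m$ with common color $\alpha$, and write $\alpha_0=(i,\{a,b\})$ with $a\neq b$. By the definition of $\varphi_1$, every edge $v_jv_{j+1}$ of the cycle satisfies $\{v_j^{(i)},v_{j+1}^{(i)}\}=\{a,b\}$, so each $v_j^{(i)}\in\{a,b\}$ and consecutive values $v_j^{(i)}$ and $v_{j+1}^{(i)}$ are distinct. Thus the sequence $v_0^{(i)},v_1^{(i)},\ldots,v_{m-1}^{(i)}$ strictly alternates between $a$ and $b$ as one traverses the cycle.

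Finally, the closing edge $v_{m-1}v_0$ also forces $v_{m-1}^{(i)}\neq v_0^{(i)}$, and combined with the strict alternation this requires $m$ to be even, contradicting our assumption. There is essentially no obstacle beyond correctly unpacking the first coordinate of $\alpha$; the remainder is a one-step parity argument, and the other coordinates $\alpha_1,\ldots,\alpha_\beta$ of the color play no role in this lemma.
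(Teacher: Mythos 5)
Your proof is correct and follows essentially the same route as the paper: both arguments read off the first coordinate $(i,\{x,y\})$ of the common color, observe that the $i$-th blocks must alternate between the two values along the cycle, and derive a parity contradiction from the odd length.
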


\begin{proof}
Suppose there exists a sequence of distinct vertices, $v_1,\ldots,v_k$, for which $k$ is odd and \[\varphi_1(v_1,v_2) = \varphi_1(v_2,v_3) = \cdots = \varphi_1(v_{k-1},v_k) = \varphi_1(v_k,v_1) = \alpha.\] Let $\alpha_0=(i,\{x,y\})$. Without loss of generality we may assume that $v_1^{(i)} = x$ and $v_2^{(i)} = y$. It follows that \[y = v_2^{(i)} = v_4^{(i)} = \cdots = v_{k-1}^{(i)} = v_1^{(i)} = x,\] a contradiction.
\end{proof}

\begin{figure}
     \begin{subfigure}[b]{0.22\textwidth}
          \centering
          \resizebox{\linewidth}{!}{
          \begin{tikzpicture}
		\draw[very thick, black] (1.0, 0.0) -- (6.123233995736766e-17, 1.0);
		\draw[very thick, red] (1.0, 0.0) -- (-1.0, 1.2246467991473532e-16);
		\draw[very thick, red] (1.0, 0.0) -- (-1.8369701987210297e-16, -1.0);
		\filldraw [black] (1.0, 0.0) circle (1pt);
		\node [right] at (1.0, 0.0) {$a$};
		\filldraw [black] (6.123233995736766e-17, 1.0) circle (1pt);
		\node [above] at (6.123233995736766e-17, 1.0) {$b$};
		\draw[very thick, black] (-1.0, 1.2246467991473532e-16) -- (-1.8369701987210297e-16, -1.0);
		\filldraw [black] (-1.0, 1.2246467991473532e-16) circle (1pt);
		\node [left] at (-1.0, 1.2246467991473532e-16) {$c$};
		\filldraw [black] (-1.8369701987210297e-16, -1.0) circle (1pt);
		\node [below] at (-1.8369701987210297e-16, -1.0) {$d$};
	\end{tikzpicture}
					}
          \caption{}
          \label{fig:A}
     \end{subfigure}
     \begin{subfigure}[b]{0.22\textwidth}
          \centering
          \resizebox{\linewidth}{!}{
          \begin{tikzpicture}
\draw[very thick, black] (1.0, 0.0) -- (6.123233995736766e-17, 1.0);
\draw[very thick, black] (1.0, 0.0) -- (-1.0, 1.2246467991473532e-16);
\draw[very thick, green] (1.0, 0.0) -- (-1.8369701987210297e-16, -1.0);
\filldraw [black] (1.0, 0.0) circle (1pt);
\node [right] at (1.0, 0.0) {$a$};
\draw[very thick, red] (6.123233995736766e-17, 1.0) -- (-1.0, 1.2246467991473532e-16);
\draw[very thick, red] (6.123233995736766e-17, 1.0) -- (-1.8369701987210297e-16, -1.0);
\filldraw [black] (6.123233995736766e-17, 1.0) circle (1pt);
\node [above] at (6.123233995736766e-17, 1.0) {$b$};
\draw[very thick, green] (-1.0, 1.2246467991473532e-16) -- (-1.8369701987210297e-16, -1.0);
\filldraw [black] (-1.0, 1.2246467991473532e-16) circle (1pt);
\node [left] at (-1.0, 1.2246467991473532e-16) {$c$};
\filldraw [black] (-1.8369701987210297e-16, -1.0) circle (1pt);
\node [below] at (-1.8369701987210297e-16, -1.0) {$d$};
\end{tikzpicture}
          }
          \caption{}
          \label{fig:B}
     \end{subfigure}
     \begin{subfigure}[b]{0.22\textwidth}
          \centering
          \resizebox{\linewidth}{!}{
          \begin{tikzpicture}
\draw[very thick, black] (1.0, 0.0) -- (0.30901699437494745, 0.95105651629515353);
\draw[very thick, red] (1.0, 0.0) -- (-0.80901699437494734, 0.58778525229247325);
\filldraw [black] (1.0, 0.0) circle (1pt);
\node [right] at (1.0, 0.0) {$a$};
\draw[very thick, black] (0.30901699437494745, 0.95105651629515353) -- (-0.80901699437494734, 0.58778525229247325);
\filldraw [black] (0.30901699437494745, 0.95105651629515353) circle (1pt);
\node [above] at (0.30901699437494745, 0.95105651629515353) {$b$};
\draw[very thick, black] (-0.80901699437494734, 0.58778525229247325) -- (-0.80901699437494745, -0.58778525229247303);
\draw[very thick, red] (-0.80901699437494734, 0.58778525229247325) -- (0.30901699437494723, -0.95105651629515364);
\filldraw [black] (-0.80901699437494734, 0.58778525229247325) circle (1pt);
\node [left] at (-0.80901699437494734, 0.58778525229247325) {$c$};
\draw[very thick, red] (-0.80901699437494745, -0.58778525229247303) -- (0.30901699437494723, -0.95105651629515364);
\filldraw [black] (-0.80901699437494745, -0.58778525229247303) circle (1pt);
\node [left] at (-0.80901699437494745, -0.58778525229247303) {$d$};
\filldraw [black] (0.30901699437494723, -0.95105651629515364) circle (1pt);
\node [below] at (0.30901699437494723, -0.95105651629515364) {$e$};
\end{tikzpicture}
          }
          \caption{}
          \label{fig:C}
     \end{subfigure}
     \begin{subfigure}[b]{0.22\textwidth}
          \centering
          \resizebox{\linewidth}{!}{
          \begin{tikzpicture}
\draw[very thick, black] (1.0, 0.0) -- (0.30901699437494745, 0.95105651629515353);

\draw[very thick, red] (1.0, 0.0) -- (-0.80901699437494745, -0.58778525229247303);
\draw[very thick, black] (1.0, 0.0) -- (0.30901699437494723, -0.95105651629515364);
\draw[very thick, red] (0.30901699437494745, 0.95105651629515353) -- (-0.80901699437494734, 0.58778525229247325);

\draw[very thick, black] (-0.80901699437494734, 0.58778525229247325) -- (0.30901699437494723, -0.95105651629515364);
\draw[very thick, red] (-0.80901699437494745, -0.58778525229247303) -- (0.30901699437494723, -0.95105651629515364);
\filldraw [black] (1.0, 0.0) circle (1pt);
\node [right] at (1.0, 0.0) {$a$};
\filldraw [black] (0.30901699437494745, 0.95105651629515353) circle (1pt);
\node [above] at (0.30901699437494745, 0.95105651629515353) {$b$};
\filldraw [black] (-0.80901699437494734, 0.58778525229247325) circle (1pt);
\node [left] at (-0.80901699437494734, 0.58778525229247325) {$c$};
\filldraw [black] (-0.80901699437494745, -0.58778525229247303) circle (1pt);
\node [left] at (-0.80901699437494745, -0.58778525229247303) {$d$};
\filldraw [black] (0.30901699437494723, -0.95105651629515364) circle (1pt);
\node [below] at (0.30901699437494723, -0.95105651629515364) {$e$};
\end{tikzpicture}
          }
          \caption{}
          \label{fig:D}
     \end{subfigure}
     \caption{Four configurations avoided by the CFLS coloring.}
 \end{figure}
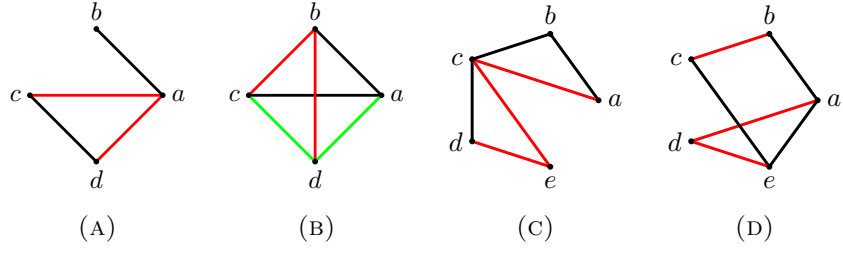

\begin{lemma}
\label{forbidden}
The CFLS coloring forbids four distinct vertices $a,b,c,d \in V$ for which $\varphi_1(a,b)=\varphi_1(c,d)$ and $\varphi_1(a,c)=\varphi_1(a,d)$ (see Figure~\ref{fig:A}).
\end{lemma}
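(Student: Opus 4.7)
My plan is to let $\alpha = \varphi_1(a,b) = \varphi_1(c,d)$ with $\alpha_0 = (i, \{x,y\})$, and $\gamma = \varphi_1(a,c) = \varphi_1(a,d)$ with $\gamma_0 = (j, \{u,v\})$, and derive a contradiction by showing that the $i$-th bit-coordinate of $\gamma$ cannot be consistently defined from the two edges it is supposed to color.

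The first step is to extract from $\gamma = \varphi_1(a,c) = \varphi_1(a,d)$ the fact that $c$ and $d$ must agree on an initial segment of blocks. For $k < j$, the entry $\gamma_k = 0$ forces $a^{(k)} = c^{(k)}$ and $a^{(k)} = d^{(k)}$, so $c^{(k)} = d^{(k)}$. For $k = j$, the first coordinate tells me that $\{a^{(j)}, c^{(j)}\} = \{u,v\} = \{a^{(j)}, d^{(j)}\}$ is a two-element set containing $a^{(j)}$, which forces $c^{(j)}$ and $d^{(j)}$ both to equal the other element of $\{u,v\}$, giving $c^{(j)} = d^{(j)}$. Consequently $c$ and $d$ agree on blocks $1,\ldots,j$. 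But $\alpha_0 = (i,\{x,y\})$ says that the first block at which $c$ and $d$ differ is $i$, so I must have $i > j$.

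Now I focus on block $i$. Since $\{a^{(i)}, b^{(i)}\} = \{c^{(i)}, d^{(i)}\}$ is a two-element set, one of $c^{(i)}, d^{(i)}$ equals $a^{(i)}$ and the other equals $b^{(i)} \neq a^{(i)}$; without loss of generality $c^{(i)} = a^{(i)}$ and $d^{(i)} = b^{(i)}$. Then the $i$-th coordinate of $\varphi_1(a,c)$ is $\gamma_i = 0$ since $a$ and $c$ agree at block $i$, while the $i$-th coordinate of $\varphi_1(a,d)$ is nonzero since $a$ and $d$ disagree at block $i$ (so $\gamma_i$ would record the first differing bit). This contradicts $\varphi_1(a,c) = \varphi_1(a,d)$ and completes the argument. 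I do not anticipate a serious obstacle; the only thing to keep track of is that the equality of the color $\gamma$ propagates agreement between $c$ and $d$ through block $j$, which then forces $i > j$ and makes the block-$i$ comparison well-defined.
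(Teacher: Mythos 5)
Your proof is correct and uses essentially the same argument as the paper: since $\{c^{(i)},d^{(i)}\}=\{a^{(i)},b^{(i)}\}$, exactly one of $c,d$ agrees with $a$ at block $i$, forcing $\gamma_i$ to be simultaneously zero and nonzero. Your preliminary step establishing $i>j$ is harmless but unnecessary, since the coordinate $\gamma_i$ is defined for every block regardless of how $i$ compares to $j$; the paper goes straight to the block-$i$ contradiction.
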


\begin{proof}
Assume towards a contradiction that $\varphi_1(a,b)=\varphi_1(c,d)=\alpha$ and $\varphi_1(a,c)=\varphi_1(a,d)=\gamma$. Let $\alpha_0=(i,\{x,y\})$. Without loss of generality, $a^{(i)}=c^{(i)}=x$ and $b^{(i)}=d^{(i)}=y$. Then $\gamma_i = 0$ since $a$ and $c$ agree at $i$, but $\gamma_i \neq 0$ as $a$ and $d$ differ at $i$, a contradiction.
\end{proof}

\begin{lemma}
The CFLS coloring forbids four distinct vertices $a,b,c,d \in V$ for which $\varphi_1(a,b)=\varphi_1(a,c)$, $\varphi_1(b,d)=\varphi_1(b,c)$, and $\varphi_1(a,d)=\varphi_1(c,d)$ (see Figure~\ref{fig:B}).
\end{lemma}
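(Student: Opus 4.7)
My plan is to reuse the strategy of Lemma~\ref{forbidden}: derive a contradiction by tracking the $i$-th block coordinates of all four vertices, where $i$ is the block index encoded in the first color $\alpha$.

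First, set $\alpha = \varphi_1(a,b) = \varphi_1(a,c)$ and write $\alpha_0 = (i,\{x,y\})$. Since $a$ is the common endpoint of the two $\alpha$-edges, we may assume $a^{(i)} = x$, which forces $b^{(i)} = c^{(i)} = y$. This pins down $b$ and $c$ at block $i$.

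Next, turn to $\beta = \varphi_1(b,c) = \varphi_1(b,d)$. Because $b$ and $c$ agree at index $i$, the $i$-th block coordinate $\beta_i$ must equal $0$. Matching this against $\varphi_1(b,d)$ forces $b$ and $d$ to also agree at $i$, so $d^{(i)} = y$ as well.

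Finally, consider $\gamma = \varphi_1(a,d) = \varphi_1(c,d)$ at block~$i$. On the $(a,d)$ side we have $a^{(i)} = x \neq y = d^{(i)}$, so $\gamma_i$ is a positive bit index; on the $(c,d)$ side we have $c^{(i)} = y = d^{(i)}$, so $\gamma_i = 0$. This contradiction completes the argument.

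The only real choice in the proof is which block index to scrutinize, and there is essentially no obstacle: the natural pick is the index $i$ appearing in $\alpha_0$, because $\alpha$ is the color at the ``fork'' vertex $a$ (the vertex with two monochromatic incident edges in the configuration). That choice immediately propagates the equality $y$ across the three vertices $b$, $c$, $d$ via Lemma-\ref{forbidden}-style reasoning, at which point the $(a,d)$ versus $(c,d)$ asymmetry at block $i$ delivers the contradiction.
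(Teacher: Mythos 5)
Your proof is correct and uses the same coordinate-chasing technique as the paper's: the key fact in both is that the $k$th coordinate of a CFLS color is $0$ exactly when the two endpoints agree at block $k$, which lets you propagate block values around the configuration. The only difference is bookkeeping — you work entirely within the block $i$ from $\alpha_0$ (deducing $d^{(i)}=y$ from $\varphi_1(b,c)=\varphi_1(b,d)$ having $i$th coordinate $0$, then contradicting $\varphi_1(a,d)=\varphi_1(c,d)$ at block $i$), whereas the paper passes to the block $j$ of the second color and chases agreement around to $b^{(j)}$, so your variant is marginally leaner since it never needs the auxiliary observation $i\neq j$.
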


\begin{proof}
Assume towards a contradiction that we have $\varphi_1(a,b)=\varphi_1(a,c)=\alpha$,  $\varphi_1(b,d)=\varphi_1(b,c)=\gamma$, and $\varphi_1(a,d)=\varphi_1(c,d)=\pi$. Let $\alpha_0=(i,\{x,y\})$, $\gamma_0=(j,\{s,t\})$, and $\pi_0 = (k,\{w,v\})$. Without loss of generality we may assume that $a^{(i)}=x$ and $b^{(i)}=c^{(i)}=y$. Since $b$ and $c$ differ at $j$, then $i \neq j$. Without loss of generality we may assume that $b^{(j)}=s$ and $c^{(j)}=d^{(j)}=t$. So $\pi_j=0$, and hence, $a^{(j)}=t$ since $\varphi_1(a,d)=\pi$. Therefore,  $\alpha_j=0$, which implies that $b^{(j)}=t$, a contradiction since $s \neq t$.
\end{proof}

\begin{lemma}
The CFLS coloring forbids five distinct vertices $a,b,c,d,e \in V$ that contain two monochromatic paths of three edges each that share endpoints: $\varphi_1(a,b)=\varphi_1(b,c)=\varphi_1(c,d)$ and $\varphi_1(a,c)=\varphi_1(c,e)=\varphi_1(e,d)$ (see Figure~\ref{fig:C}).
\end{lemma}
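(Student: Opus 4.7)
The plan is to mimic the strategy of the previous lemmas: pick the ``type'' coordinate of one color, derive the forced alternation of block values along one monochromatic path, and then show that the other monochromatic path is incompatible with this at the same coordinate. The two tools are the alternation property (if $\alpha_0=(i,\{x,y\})$ and several edges form a monochromatic walk in color $\alpha$, then the values of the vertices at block $i$ alternate between $x$ and $y$) and the vanishing property (if two vertices agree at block $k$, then the $k$th coordinate of the color of the edge between them is $0$).

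First I would let $\alpha=\varphi_1(a,b)=\varphi_1(b,c)=\varphi_1(c,d)$ with $\alpha_0=(i,\{x,y\})$ and, after relabeling, assume $a^{(i)}=x$. Alternation along the $\alpha$-path then forces
\[
a^{(i)}=x,\quad b^{(i)}=y,\quad c^{(i)}=x,\quad d^{(i)}=y.
\]
Now I would turn to the $\gamma$-path $a,c,e,d$, where $\gamma:=\varphi_1(a,c)=\varphi_1(c,e)=\varphi_1(e,d)$. Since $a^{(i)}=c^{(i)}=x$, the vertices $a$ and $c$ agree at block $i$, hence $\gamma_i=0$. But then the other two $\gamma$-edges force $c^{(i)}=e^{(i)}$ and $e^{(i)}=d^{(i)}$, giving $d^{(i)}=x$, which contradicts $d^{(i)}=y$.

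So the entire argument is a one-coordinate analysis at the index $i$ coming from $\alpha_0$: the $\alpha$-path of length three makes the endpoints $a$ and $d$ take different values in block $i$ (one $x$, one $y$), while the $\gamma$-path of length three from $a$ through $c$ and $e$ to $d$ is constrained by $\gamma_i=0$ to keep that block constant, forcing $a^{(i)}=d^{(i)}$. I do not expect any real obstacle; the only thing to check is that the case $i=j$ (where $\gamma_0=(j,\{s,t\})$) causes no issue, but it is automatically excluded since $a$ and $c$ agree at $i$ yet differ at $j$ by definition of $\gamma_0$, so that $j\neq i$ need not even be invoked for the contradiction above.
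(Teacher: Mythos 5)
Your proof is correct and is essentially identical to the paper's: both derive the alternation $a^{(i)}=x$, $b^{(i)}=y$, $c^{(i)}=x$, $d^{(i)}=y$ along the $\alpha$-path, note that $a^{(i)}=c^{(i)}$ forces $\gamma_i=0$, and then propagate the block-$i$ value along the $\gamma$-path $c,e,d$ to reach the contradiction $x=y$. No differences worth noting.
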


\begin{proof}
Assume towards a contradiction that \[\varphi_1(a,b)=\varphi_1(b,c)=\varphi_1(c,d)=\alpha\] and \[\varphi_1(a,c)=\varphi_1(c,e)=\varphi_1(e,d) = \gamma.\] Let $\alpha_0 = (i,\{x,y\})$ and $\gamma_0=(j,\{s,t\})$. Without loss of generality we may assume that $a^{(i)}=c^{(i)}=x$ and $b^{(i)}=d^{(i)}=y$. Note that $\varphi_1(a,c)=\gamma$ implies $\gamma_i=0$. Then $e^{(i)}=d^{(i)}=y$ and $e^{(i)}=c^{(i)} = x$. So $x=y$, a contradiction.
\end{proof}

\begin{lemma}
The CFLS coloring forbids five distinct vertices $a,b,c,d,e \in V$ for which $\varphi_1(a,b)=\varphi_1(a,e)=\varphi_1(e,c)$ and $\varphi_1(a,d)=\varphi_1(d,e)=\varphi_1(b,c)$ (see Figure~\ref{fig:D}).
\end{lemma}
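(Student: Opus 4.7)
The plan is to adapt the block-index strategy used in the preceding lemmas, but to focus on the block $j$ coming from $\gamma_0 = (j,\{s,t\})$ rather than on the block $i$ coming from $\alpha_0$. The reason is that the three $\gamma$-edges form a small tree $a-d-e$ together with a disjoint edge $bc$, and this Y-shaped piece through $d$ is what will drive the argument.

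First, I would use the monochromatic $\gamma$-path $a-d-e$: since both $ad$ and $de$ have color $\gamma$, the pairs $\{a^{(j)}, d^{(j)}\}$ and $\{d^{(j)}, e^{(j)}\}$ both equal $\{s,t\}$. Because $s \neq t$, this forces $a^{(j)} = e^{(j)}$, as each must be the unique element of $\{s,t\}$ distinct from $d^{(j)}$. This is the one nontrivial reduction; everything after it is transport of information along edges.

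Next, I would turn to the $\alpha$-edges. The edge $ae$ is $\alpha$-colored and its endpoints agree at block $j$, so necessarily $\alpha_j = 0$, i.e.\ every $\alpha$-edge has endpoints agreeing at block $j$. Propagating this along the remaining $\alpha$-edges $ab$ and $ec$ yields $b^{(j)} = a^{(j)}$ and $c^{(j)} = e^{(j)}$, hence $b^{(j)} = c^{(j)}$. But $bc$ is itself $\gamma$-colored, so $\{b^{(j)}, c^{(j)}\} = \{s,t\}$ and in particular $b^{(j)} \neq c^{(j)}$, a contradiction. I expect no serious obstacle: the only moment that requires any insight is the initial step producing $a^{(j)} = e^{(j)}$ from the $\gamma$-path through $d$, after which the $\alpha$-path $b - a - e - c$ mechanically carries the agreement over to $b$ and $c$, clashing with the final $\gamma$-edge $bc$.
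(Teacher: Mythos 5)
Your proof is correct. It uses the same basic tool as the paper---chasing block values at a single distinguished block and exploiting that a color $\alpha$ has $\alpha_k=0$ exactly when the endpoints agree at block $k$---but it is organized around the other color: you work at the block $j$ of $\gamma_0$, extract $a^{(j)}=e^{(j)}$ from the $\gamma$-path $a\text{--}d\text{--}e$, deduce $\alpha_j=0$ from the edge $ae$, propagate agreement along the $\alpha$-path $b\text{--}a\text{--}e\text{--}c$, and land the contradiction on the $\gamma$-edge $bc$. The paper instead works at the block $i$ of $\alpha_0$: it fixes the block-$i$ values of $a,b,c,e$, uses the prefix agreement $b^{(k)}=a^{(k)}=e^{(k)}=c^{(k)}$ for $k<i$ to force $\gamma_0=(i,\{x,y\})$ via the edge $bc$, and then obtains the contradiction at $d$, which would have to agree with $a$ or $e$ at block $i$. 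Your route is slightly leaner in that it never needs the ``first block of difference'' / prefix-agreement feature of the first coordinate beyond what is encoded in the matching colors, only the set of block values recorded in $\gamma_0$ and the vanishing of $\alpha_j$; the paper's version, by pinning down $\gamma_0$ completely, localizes the contradiction at the vertex $d$. Both arguments are equally short and each also covers the degenerate possibility $\alpha=\gamma$ without extra work.
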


\begin{proof}
Assume towards a contradiction that $\varphi_1(a,b)=\varphi_1(a,e)=\varphi_1(e,c) = \alpha$ and $\varphi_1(a,d)=\varphi_1(d,e)=\varphi_1(b,c) = \gamma$. Let $\alpha_0=(i,\{x,y\})$. We may assume without a loss of generality that $b^{(i)} = e^{(i)} = x$ and $a^{(i)} = c^{(i)} = y$. We also know that $b^{(k)}=a^{(k)}=e^{(k)}=c^{(k)}$ for all $k<i$. Since $\varphi_1(b,c) = \gamma$, then $\gamma_0 = (i,\{x,y\})$. So either $d^{(i)}=x$ or $d^{(i)}=y$. Therefore, $d$ must agree with either $a$ or $e$ at $i$, a contradiction.
\end{proof}

\subsection{Modified CFLS}

We will now add to the CFLS coloring to avoid the striped $K_4$, an edge-coloring of four distinct vertices $a,b,c,d$ such that every pair of non-incident edges have the same color (see Figure~\ref{stripe}). The CFLS coloring alone will not avoid such arrangements, but the product of $\varphi_1$ with another small edge-coloring, $\varphi_2$, will.

We will define the coloring $\varphi_2$ on the same set of vertices as the CFLS coloring, $V=\{0,1\}^{\beta^2}$. However, we will also need to consider the vertices as an ordered set. Consider each vertex to be an integer represented in binary. Then order the vertices by the standard ordering of the integers. That is, $x<y$ if and only if the first bit at which $x$ and $y$ differ is zero in $x$ and one in $y$. This ordering plays a large role in a recent construction by Mubayi \cite{mubayi2016} for a small case of the hypergraph version of the $(p,q)$-coloring problem. Note that each $\beta$-block is a binary representation of an integer from $0$ to $2^{\beta}-1$,  so these blocks can be considered ordered in the same way. Moreover, note that if $x<y$ and if the first $\beta$-block at which $x$ and $y$ differ is $i$, then it must be the case that $x^{(i)}<y^{(i)}$.

Let $x,y \in V$ such that $x<y$. We define the second coloring as \[\varphi_2(x,y) = \left( \delta_1(x,y),\ldots,\delta_{\beta}(x,y) \right)\] where for each $i$, \[\delta_i(x,y) = \left\{
        \begin{array}{ll}
            -1 & \quad x^{(i)} > y^{(i)} \\
            +1 & \quad x^{(i)} \leq y^{(i)}
        \end{array}
    \right. \]
    
This construction uses $2^{\beta}$ colors. Therefore, the modified CFLS coloring, $\varphi = \varphi_1 \times \varphi_2$, uses \[\beta^{\beta+1}2^{3\beta} = \sqrt{\log n}^{\sqrt{\log n}+1}2^{3\sqrt{\log n}} = 2^{O \left( \sqrt{\log n} \log \log n\right)}\] colors.

\begin{figure}
\centering
\begin{tikzpicture}[scale=1]
\draw[very thick, black] (1.0, 0.0) -- (6.123233995736766e-17, 1.0);
\draw[very thick, red] (1.0, 0.0) -- (-1.0, 1.2246467991473532e-16);
\draw[very thick, green] (1.0, 0.0) -- (-1.8369701987210297e-16, -1.0);
\filldraw [black] (1.0, 0.0) circle (1pt);
\draw[very thick, green] (6.123233995736766e-17, 1.0) -- (-1.0, 1.2246467991473532e-16);
\draw[very thick, red] (6.123233995736766e-17, 1.0) -- (-1.8369701987210297e-16, -1.0);
\filldraw [black] (6.123233995736766e-17, 1.0) circle (1pt);
\draw[very thick, black] (-1.0, 1.2246467991473532e-16) -- (-1.8369701987210297e-16, -1.0);
\filldraw [black] (-1.0, 1.2246467991473532e-16) circle (1pt);
\filldraw [black] (-1.8369701987210297e-16, -1.0) circle (1pt);
\end{tikzpicture}
\caption{A striped $K_4$.}
\label{stripe}
\end{figure}
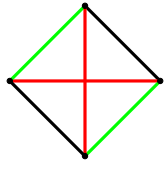
    
\begin{lemma}
The modified CFLS coloring $\varphi$ forbids four distinct vertices $a,b,c,d \in V$ with $\varphi(a,b)=\varphi(c,d)$, $\varphi(a,c)=\varphi(b,d)$, and $\varphi(a,d)=\varphi(b,c)$ (see Figure~\ref{stripe}).
\end{lemma}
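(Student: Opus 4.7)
My plan is to derive a contradiction from the assumed monochromaticity of the matching $\{ad,bc\}$ by showing that $\varphi_2$ must disagree on these two edges at one specific block. The work will be carried entirely by the first coordinate of $\varphi_1$ together with the vertex ordering used in the definition of $\varphi_2$.

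First, I would note that the three color classes of a striped $K_4$ are exactly the three perfect matchings on its four vertices, so the hypothesis is invariant under any relabeling of $\{a,b,c,d\}$. In particular I may reindex so that $a<b<c<d$ as integers in $V$. Writing $\alpha = \varphi(a,b) = \varphi(c,d)$ with first coordinate $\alpha_0 = (i,\{x,y\})$ and $x<y$, the fact that $i$ is the first block at which $a,b$ (respectively $c,d$) differ, combined with $a<b$ and $c<d$, forces $a^{(i)} = c^{(i)} = x$ and $b^{(i)} = d^{(i)} = y$. So at block $i$ the four vertices are ``crossed'': the two smaller-indexed vertices $a,c$ both hold $x$, and the two larger-indexed vertices $b,d$ both hold $y$.

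The crucial comparison is then at coordinate $i$ of $\varphi_2$ on the supposedly monochromatic pair $\{ad,bc\}$. Since $a<d$ with $a^{(i)} = x \leq y = d^{(i)}$, we have $\delta_i(a,d) = +1$; but since $b<c$ with $b^{(i)} = y > x = c^{(i)}$, we have $\delta_i(b,c) = -1$. Thus $\varphi_2(a,d) \neq \varphi_2(b,c)$, which contradicts $\varphi(a,d) = \varphi(b,c)$.

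Because the argument is this short, there is no real obstacle. The only step that requires a bit of care is the relabeling $a<b<c<d$, which is legitimate precisely because the striped-$K_4$ hypothesis is fully symmetric in the four labels, so any one of the three monochromatic matchings can be designated as $\{ab,cd\}$.
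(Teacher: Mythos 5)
Your proof is correct, and it is a genuinely leaner reduction than the one in the paper, even though both arguments run on the same engine: pinning down the block values at the first-coordinate block of one monochromatic matching via $\varphi_1$, and then exhibiting a $\pm 1$ clash of some $\delta_k$ on another monochromatic matching via $\varphi_2$. The paper normalizes by taking $i=\min\{i,j,k\}$ over the first-coordinate blocks of the three colors, argues that exactly one of the other two blocks equals $i$ while the third is strictly larger, fixes block values at both $i$ and $k>i$, and finds the sign clash at block $k$ on the matching $\{ac,bd\}$. You instead normalize by relabeling so that $a<b<c<d$ (legitimate, since any relabeling permutes the three perfect matchings and each is monochromatic by hypothesis --- note that distinctness of the three colors is never needed), read off $a^{(i)}=c^{(i)}=x$ and $b^{(i)}=d^{(i)}=y$ from the color of $\{ab,cd\}$ together with the fact that the order of two vertices is decided inside their first differing block, and get $\delta_i(a,d)=+1$ versus $\delta_i(b,c)=-1$ on the crossing matching $\{ad,bc\}$. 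This avoids the paper's case analysis over which block indices coincide and locates the contradiction at the block $i$ itself rather than at a larger block; like the paper's proof, it uses only the first coordinate of $\varphi_1$, so it equally yields the remark that $2^{O(\sqrt{\log n})}$ colors suffice to kill the striped $K_4$.
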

    
\begin{proof}
Assume towards a contradiction that a striped $K_4$ can occur. Then, $\varphi_1(a,b)=\varphi_1(c,d) = \alpha$, $\varphi_1(a,c)=\varphi_1(b,d) = \gamma$, and $\varphi_1(a,d)=\varphi_1(b,c)=\pi$. Let $\alpha_0 = (i,\{x,y\})$, $\gamma_0=(j,\{s,t\})$, and $\pi_0=(k,\{v,w\})$. Without loss of generality, assume that $i = \min\{i,j,k\}$. Since exactly one of $d^{(i)}$ and $c^{(i)}$ equals $a^{(i)}$, then either $j=i$ or $k=i$. Moreover, the other index must be strictly greater than $i$. So we may assume that $j=i$ and that $i < k$.

Let $a^{(i)}=d^{(i)}=x$, $b^{(i)}=c^{(i)}=y$, $a^{(k)}=b^{(k)}=v$, and $c^{(k)}=d^{(k)}=w$. Without loss of generality we may assume that $x<y$. This implies that $a,d < b,c$. If $v < w$, then $\delta_k(a,c) = +1$ and $\delta_k(d,b) = -1$. Therefore, $\varphi_2(a,c) \neq \varphi_2(b,d)$, a contradiction. So, it must be the case that $w < v$. But then $\delta_k(a,c)=-1$ and $\delta_k(b,d)=+1$, which yields the same contradiction.
\end{proof}

Note that to eliminate the striped $K_4$ configuration we needed just \[\beta 2^{3 \beta} = \sqrt{\log n} 2^{3\sqrt{\log n}} = 2^{O \left( \sqrt{\log n} \right)}\] colors since only the first coordinate of the CFLS coloring was needed in the proof.

We can now systematically look at all edge-colorings of a $K_5$ up to isomorphism that use no more than four colors and do not contain any of these configurations to get a list of possible ``bad" colorings of a $K_5$ that could survive the modified CFLS coloring. A careful mathematician with a free day could work through these cases by hand. A simple computer program like the inelegant one detailed in Appendix~\ref{script} is easier to verify. However this process is executed, we end up with three possible bad colorings of $K_5$ (see Figure~\ref{badguys}). Avoiding these will require both the CFLS coloring and the MIP coloring defined in Section~\ref{MIP}.

Before we move on from discussing the modified CFLS coloring, we need to point out one nice fact that will be used in Section~\ref{elimination}.

\begin{lemma}
\label{order}
If $a<b<c$, then $\varphi(a,b) \neq \varphi(b,c)$.
\end{lemma}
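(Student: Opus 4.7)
The plan is to prove the stronger statement that $\varphi_1(a,b) \neq \varphi_1(b,c)$ already, which is certainly sufficient since $\varphi = \varphi_1 \times \varphi_2$. I would examine only the first coordinate $\alpha_0 = (i,\{x^{(i)},y^{(i)}\})$ of the CFLS color, where $i$ is the first block at which the two endpoints differ.

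Let $i_1$ be the first block at which $a$ and $b$ differ, and $i_2$ the first block at which $b$ and $c$ differ, so that the first coordinates of $\varphi_1(a,b)$ and $\varphi_1(b,c)$ are $\bigl(i_1,\{a^{(i_1)},b^{(i_1)}\}\bigr)$ and $\bigl(i_2,\{b^{(i_2)},c^{(i_2)}\}\bigr)$ respectively. If $i_1 \neq i_2$ we are done immediately, so the only case to analyze is $i_1 = i_2 = i$. In that case I would invoke the observation already made in the excerpt: since $a < b$ and their first differing block is $i$, we have $a^{(i)} < b^{(i)}$; likewise $b^{(i)} < c^{(i)}$. Consequently $a^{(i)} < b^{(i)} < c^{(i)}$, so $a^{(i)} \neq c^{(i)}$ and the unordered pairs $\{a^{(i)},b^{(i)}\}$ and $\{b^{(i)},c^{(i)}\}$ are distinct. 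Hence the two first coordinates disagree, and $\varphi_1(a,b)\neq\varphi_1(b,c)$.

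There is no real obstacle here; the entire proof is a one-case-split argument using only the definition of $\varphi_1$ and the observation about the standard integer ordering already recorded just before the definition of $\varphi_2$. Notably, $\varphi_2$ is not needed for this lemma, even though it is what motivated recording the ordering on $V$.
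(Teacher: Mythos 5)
Your proposal is correct and follows essentially the same route as the paper: both arguments use only the first coordinate of $\varphi_1$ together with the observation that if $a<b$ and $i$ is their first differing block then $a^{(i)}<b^{(i)}$; the paper phrases it as a contradiction ($c^{(i)}=x<y=b^{(i)}$ forces $c<b$), while you argue directly that $a^{(i)}<b^{(i)}<c^{(i)}$ makes the unordered pairs distinct, which is the same idea. Your additional remark that $\varphi_2$ is not needed matches the paper, whose proof likewise uses only $\varphi_1$.
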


\begin{proof}
Suppose $\varphi_1(a,b)=\varphi_1(b,c) = \alpha$ and that $\alpha_0 = (i, \{x,y\})$ for $x < y$. Then $a^{(i)} = x$ and $b^{(i)}=y$. But then $c^{(i)}=x$. Therefore, $c < b$, a contradiction.
\end{proof}

\section{The Modified Inner Product coloring}
\label{MIP}

Let $q$ be some odd prime power, and let $\mathbb{F}_q^{\text{*}}$ denote the nonzero elements of the finite field with $q$ elements. The vertices of our graph will be the  three-dimensional vectors over this set, \[V =\left( \mathbb{F}_q^{\text{*}}\right)^3.\] All algebraic operations used in defining the MIP coloring are the standard ones from the underlying field, and $\cdot$ will denote the standard inner product of two vectors, \[x \cdot y = x_1y_1 + x_2y_2 + x_3y_3\] where $x = (x_1,x_2,x_3)$ and $y=(y_1,y_2,y_3)$. Additionally, let $<$ be any linear order on the elements of $\mathbb{F}_q$, and extend this to a linear order on the vectors so that \[x<y \iff x_i < y_i\] where $i \in \{1,2,3\}$ is the first position at which $x_i \neq y_i$.

The MIP coloring will be broken up into two parts, $\chi = \chi_1 \times \chi_2$. The first part $\chi_1$ uses at most $12n^{1/3}$ colors. The second part $\chi_2$ uses only four colors and is used to split up colors from $\chi_1$ in order to avoid one particularly difficult configuration. In this section, we will first define $\chi_1$. Then, after a brief review of the necessary linear algebra concepts, we will prove some key properties of $\chi_1$. Finally, we will define $\chi_2$.

\subsection{The coloring $\chi_1$}

The MIP coloring should be viewed primarily as coloring each edge with the inner product of the two vectors. In Lemma~\ref{collinear} we will show that $\chi_1$ induces a proper edge coloring on every line, not just one-dimensional linear spaces but affine lines as well. This will be one of the key lemmas in showing that our construction avoids the remaining configurations. By itself, the inner product almost accomplishes this goal. However, a problem arises when one vector on a given line is orthogonal to the direction of the line. In this case, that particular vector has the same inner product with all other vectors on the line, so we must give these edges new colors. We accomplish this by replacing the inner product with another function.

The first part of $\chi_1$ labels the type of edge-coloring we will have. For two distinct vectors, $x,y \in V$, let $T(x,y)$ be a function defined by \[T(x,y) = \left\{ \begin{array}{ll}
\text{UP}_1 & \quad x \cdot y = x \cdot x \text{ and } x_1 < y_1\\
\text{UP}_2 & \quad x \cdot y = x \cdot x, x_1 = y_1, \text{ and } x<y\\
\text{DOWN}_1 & \quad x \cdot y \neq x \cdot x, x \cdot y = y \cdot y, \text{ and } x_1 < y_1\\
\text{DOWN}_2 & \quad x \cdot y \neq x \cdot x, x \cdot y = y \cdot y, x_1 = y_1, \text{ and } x < y\\
\text{ZERO} & \quad x \cdot y \not \in \{ x \cdot x, y \cdot y\} \text{ and } x \cdot y = 0\\
\text{DOT} & \quad \text{otherwise}
\end{array} \right. \] Here, the categories $\text{UP}_i$ and $\text{DOWN}_i$ let us know that at least one of the two vectors is orthogonal to the direction of the line between the two, and therefore this edge will need to receive something other than the inner product in the next part of the color. The words UP and DOWN describe the edge from the perspective of the ``special" vertex. For instance, if $x$ is orthogonal to the direction of the line it makes with $y$ and $x<y$, then $x$ looks up the edge to $y$. The need for different categories when $x_1=y_1$ is a technical point. The category DOT stands for the inner product (or the ``dot" product), and ZERO is the special case where the inner product is zero. The need to split the colors with zero inner product is also a technical point.

Let $f_T(x,y): \mathbb{F}_q^3 \rightarrow \mathbb{F}_q$ be a function defined by \[f_T(x,y) = \left\{ \begin{array}{ll}
x_1 + y_1 & \quad T \in \{\text{UP}_1,\text{DOWN}_1,\text{ZERO}\}\\
x_2 + y_2 & \quad T \in \{\text{UP}_2,\text{DOWN}_2\}\\
x \cdot y & \quad T = \text{DOT}
\end{array} \right.\] One final technical point is to differentiate colors based on whether the two vectors are linearly dependent or independent. Let \[ \delta(x,y) = \left\{ \begin{array}{ll}
0 & \quad \{x,y\} \text{ is linearly dependent}\\
1 & \quad \{x,y\} \text{ is linearly independent}
\end{array} \right.\] This is enough to define the coloring. For vertices $x<y$, let $T = T(x,y)$, and set \[\chi_1(x,y) = \left(T,f_T(x,y),\delta(x,y)\right).\]

\subsection{Algebraic definitions and facts}
\label{definitions}

We assume that the reader has some familiarity with basic linear algebra notions such as dimension, linear independence, linear combination, and span. The following definitions and facts are perhaps less familiar. All are  reproduced from definitions and propositions in Chapter 2 of the great \emph{Linear Algebra Methods in Combinatorics} book by L\'{a}szl\'{o} Babai and P\'{e}ter Frankl \cite{linear}.

\begin{definition}
Let $\mathbb{F}^n$ be a vector space, and let $S \subseteq \mathbb{F}^n$ be a set of vectors. The \emph{rank} of $S$ is the dimension of the linear space spanned by $S$.
\end{definition}

\begin{fact}
Let $\mathbb{F}$ be a field, and let $A$ be a $k \times n$ matrix over $\mathbb{F}$. Then the rank of the set of column vectors as vectors in $\mathbb{F}^k$ is equal to the rank of the set of row vectors as vectors in $\mathbb{F}^n$. We know this value as the rank of the matrix $A$, $rk(A)$. \QED
\end{fact}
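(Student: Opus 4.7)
The plan is to prove this classical row-rank-equals-column-rank theorem by the factorization argument, which I find the cleanest among the standard approaches (the alternatives being a row-echelon calculation or an appeal to the rank--nullity theorem applied to both $A$ and $A^{T}$).

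First I would set $r$ equal to the column rank of $A$, i.e.\ the dimension of the span of the columns of $A$ inside $\mathbb{F}^{k}$. I would pick a basis $c_1,\ldots,c_r$ of this column space and assemble them into a $k\times r$ matrix $C$. Because every column $a_j$ of $A$ lies in the span of $c_1,\ldots,c_r$, there exist scalars $b_{ij}\in\mathbb{F}$ with $a_j=\sum_{i=1}^{r}b_{ij}c_i$. Collecting these scalars into an $r\times n$ matrix $B=(b_{ij})$ gives the factorization $A=CB$.

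The key observation is now a block reading of this identity. Writing the $i$-th row of $A$ as $\mathrm{row}_i(A)=\sum_{j=1}^{r}C_{ij}\,\mathrm{row}_j(B)$, one sees that every row of $A$ is a linear combination of the $r$ rows of $B$. Hence the row space of $A$, a subspace of $\mathbb{F}^{n}$, has dimension at most $r$. This gives the inequality $\mathrm{rowrank}(A)\le\mathrm{colrank}(A)$.

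For the reverse inequality I would simply apply the same argument to the transpose $A^{T}$, whose rows are the columns of $A$ and vice versa: this yields $\mathrm{colrank}(A)=\mathrm{rowrank}(A^{T})\le\mathrm{colrank}(A^{T})=\mathrm{rowrank}(A)$. Combining the two inequalities gives equality, which is the statement. The only real step requiring care is verifying that the factorization $A=CB$ exists with $B$ having exactly $r$ rows; beyond that the argument is purely formal, so I do not anticipate a genuine obstacle here. The result is well-known and the factorization proof is the most self-contained one to present in a paper that only needs the fact as background.
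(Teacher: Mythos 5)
Your factorization argument is correct: the decomposition $A=CB$ with $C$ a $k\times r$ matrix whose columns form a basis of the column space and $B$ the $r\times n$ coefficient matrix does show every row of $A$ lies in the span of the $r$ rows of $B$, giving $\mathrm{rowrank}(A)\le\mathrm{colrank}(A)$, and applying this to $A^{T}$ closes the loop. Note, however, that the paper does not prove this statement at all: it is listed among background facts reproduced without proof from Chapter 2 of Babai and Frankl's \emph{Linear Algebra Methods in Combinatorics}, so there is no ``paper proof'' to compare against; your self-contained argument is a perfectly standard and valid way to supply one.
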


\begin{definition}
Let $\mathbb{F}^n$ be a vector space. An \emph{affine combination} of vectors $v_1,\ldots,v_k \in \mathbb{F}^n$ is a linear combination $\lambda_1v_1 + \cdots + \lambda_kv_k$ for $\lambda_1,\ldots,\lambda_k \in \mathbb{F}$ such that $\lambda_1 + \cdots + \lambda_k = 1$. An \emph{affine subspace} is a subset of vectors that is closed under affine combinations.
\end{definition}

\begin{fact}
Any affine subspace $U$ is either empty or the translation of some linear subspace $V$. That is, each vector $u \in U$ can be written in the form $u = v+t$ where $v$ is some vector in $V$ and $t$ is a fixed translation vector. \QED
\end{fact}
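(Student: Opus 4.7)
The plan is to handle the trivial empty case separately, then assume $U$ is nonempty and exhibit the linear subspace $V$ explicitly. I would fix an arbitrary vector $t \in U$ and define
\[
V = U - t = \{u - t : u \in U\},
\]
so that automatically $U = V + t$. The entire content of the fact is then the claim that this particular $V$ is a linear subspace of $\mathbb{F}^n$, and the key idea is that every subspace axiom can be rewritten as an affine combination of elements of $U$, which lies in $U$ by hypothesis.

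First I would check that $0 \in V$, which is immediate since $t \in U$ gives $0 = t - t \in V$. Next, for closure under addition, I would take $v_1, v_2 \in V$ and write $v_i = u_i - t$ with $u_i \in U$; then I would verify the identity
\[
(v_1 + v_2) + t = 1 \cdot u_1 + 1 \cdot u_2 + (-1) \cdot t,
\]
whose coefficients sum to $1$, so the right side lies in $U$ and therefore $v_1 + v_2 \in V$. For closure under scalar multiplication, given $v = u - t \in V$ and $\lambda \in \mathbb{F}$, I would use
\[
\lambda v + t = \lambda u + (1 - \lambda) t,
\]
again an affine combination of elements of $U$, hence in $U$, so $\lambda v \in V$.

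Finally I would observe that the choice of translation vector $t$ was arbitrary: if $t' \in U$ is any other element, then $U - t' = (U - t) - (t' - t) = V - (t' - t)$, and since $t' - t \in V$ (a subspace contains its own elements translated by itself), we recover the same linear subspace $V$. This shows that $V$ is intrinsic to $U$ and confirms the fact.

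I do not expect any real obstacle here; the only thing to be careful about is not inadvertently using any property of $U$ beyond closure under affine combinations (in particular, not assuming $0 \in U$), which is precisely why the proof routes every subspace verification through an explicit affine combination whose coefficients visibly sum to $1$.
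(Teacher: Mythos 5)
Your proof is correct: fixing $t \in U$, setting $V = U - t$, and verifying the subspace axioms via affine combinations whose coefficients visibly sum to $1$ is exactly the standard argument, and your closing remark that $V$ is independent of the choice of $t$ is a correct (and useful, though not required) addition. The paper itself offers no proof of this fact---it is quoted without proof from Chapter 2 of Babai and Frankl's \emph{Linear Algebra Methods in Combinatorics}---so there is no divergence to report; your write-up fills in the omitted routine verification faithfully.
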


\begin{definition}
The \emph{dimension} $\text{dim}(U)$ of an affine subspace $U$ is the dimension of the unique linear subspace of which $U$ is a translate. 
\end{definition}

\begin{definition}
Let $\mathbb{F}^n$ be a vector space. Let $v_1,\ldots,v_k \in \mathbb{F}^n$. We say that these vectors are \emph{affine independent} if \[\lambda_1v_1 + \cdots + \lambda_kv_k = 0\] implies that \[\lambda_1 =  \cdots = \lambda_k = 0\] for any $\lambda_1,\ldots,\lambda_k \in \mathbb{F}$ for which $\lambda_1 + \cdots + \lambda_k = 0$. Otherwise, these vectors are \emph{affine dependent}. We say that a set of vectors $S$ is a \emph{basis} for an affine subspace if they are affine independent and every vector in the subspace is an affine combination of vectors in $S$.
\end{definition}

\begin{fact}
A basis of an affine subspace $U$ contains exactly $\text{dim}(U)+1$ elements. \QED
\end{fact}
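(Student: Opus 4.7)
The plan is to reduce the statement about affine bases of $U$ to the familiar fact that a basis of a $d$-dimensional linear subspace has exactly $d$ elements. Using the previous fact, I would fix a representation $U = V + t$, where $V$ is a linear subspace with $\dim(V) = \dim(U) = d$ and $t$ is a fixed translation vector. The whole argument then rests on translating between affine combinations in $U$ and linear combinations in $V$.

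Concretely, given vectors $u_1, \ldots, u_k \in U$, I would first show the following two correspondences. For affine independence: if $\sum_{i=1}^k \lambda_i u_i = 0$ with $\sum \lambda_i = 0$, then using $\lambda_1 = -\sum_{i \geq 2} \lambda_i$ I can rewrite this as $\sum_{i=2}^k \lambda_i (u_i - u_1) = 0$, so the $u_i$ are affine independent if and only if the vectors $u_2 - u_1, \ldots, u_k - u_1$ are linearly independent. For affine spanning: every $u \in U$ can be written as $u = \sum \lambda_i u_i$ with $\sum \lambda_i = 1$, and this identity is equivalent to $u - u_1 = \sum_{i=2}^k \lambda_i (u_i - u_1)$. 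Since the differences $u - u_1$ with $u \in U$ range exactly over $V$ (as both $u$ and $u_1$ lie in the same translate of $V$), spanning $U$ affinely by $u_1, \ldots, u_k$ is equivalent to spanning $V$ linearly by $u_2 - u_1, \ldots, u_k - u_1$.

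Combining these two statements, $\{u_1, \ldots, u_k\}$ is an affine basis of $U$ if and only if $\{u_2 - u_1, \ldots, u_k - u_1\}$ is a linear basis of $V$. Since $V$ has linear dimension $d$, this forces $k - 1 = d$, that is, $k = \dim(U) + 1$. The only potentially delicate point is verifying that the correspondence does not depend on the choice of $u_1$ as the ``anchor'' vertex, but this is automatic because any reordering just yields another basis of the same subspace $V$. There is no real obstacle here; the argument is entirely bookkeeping, and the only ingredient outside the definitions is the standard fact that bases of a $d$-dimensional linear space have exactly $d$ elements.
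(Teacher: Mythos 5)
Your argument is correct: the anchoring trick (passing from $u_1,\ldots,u_k$ to the differences $u_2-u_1,\ldots,u_k-u_1$) faithfully converts affine independence and affine spanning of $U$ into linear independence and linear spanning of the underlying linear subspace $V$, so $k-1=\dim(U)$. The paper itself offers no proof to compare against---this Fact is quoted without proof from Babai and Frankl's book---and your reduction is exactly the standard argument one would give there.
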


\begin{fact}
Let $\mathbb{F}^n$ be some vector space. Let $A$ be a $k \times n$ matrix over $\mathbb{F}$ and $b \in \mathbb{F}^n$. Then the solution set to $Ax = b$ is an affine subspace of dimension $n - rk(A)$. \QED
\end{fact}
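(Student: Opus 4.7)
The plan is to reduce this to the rank--nullity theorem together with the structure of affine subspaces from the preceding fact. The argument has two steps: first identify the solution set as a coset of $\ker(A)$, then compute the dimension of this kernel.

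First I would handle the inconsistent case separately. If $Ax = b$ has no solution, then the solution set is empty, which is an affine subspace under the convention in the paper, and the dimension statement is interpreted vacuously (or one restricts to $b$ lying in the column space of $A$). Otherwise, fix any particular solution $x_0$, so that $Ax_0 = b$. For any vector $x \in \mathbb{F}^n$, linearity of $A$ gives $Ax = b$ if and only if $A(x - x_0) = 0$, i.e.\ $x - x_0 \in \ker(A) := \{v \in \mathbb{F}^n : Av = 0\}$. Hence the solution set is exactly the translate $x_0 + \ker(A)$. Since $\ker(A)$ is a linear subspace (it is closed under sums and scalar multiples), the preceding fact identifies this translate as an affine subspace, and by the paper's definition of affine dimension we have $\dim(x_0 + \ker(A)) = \dim \ker(A)$.

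It remains to show $\dim \ker(A) = n - \operatorname{rk}(A)$. The cleanest route is the rank--nullity theorem applied to the linear map $T_A : \mathbb{F}^n \to \mathbb{F}^k$ defined by $v \mapsto Av$, which yields $\dim \ker(T_A) + \dim \operatorname{im}(T_A) = n$. The image of $T_A$ is precisely the column space of $A$, whose dimension equals the column rank, and by the earlier matrix-rank fact in the paper this coincides with $\operatorname{rk}(A)$. Rearranging gives $\dim \ker(A) = n - \operatorname{rk}(A)$, which is the desired affine dimension of the solution set.

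The only mild obstacle is invoking the rank--nullity theorem itself. Since the paper treats the algebraic lemmas of this section as standard citations from Babai--Frankl, I would simply quote it; a self-contained proof is available by extending a basis $v_1, \ldots, v_m$ of $\ker(A)$ to a basis $v_1, \ldots, v_m, w_1, \ldots, w_{n-m}$ of $\mathbb{F}^n$ and verifying that $Aw_1, \ldots, Aw_{n-m}$ is a basis of the column space of $A$, so that $n - m = \operatorname{rk}(A)$.
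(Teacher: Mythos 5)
Your argument is correct, and it is the standard one: the paper itself gives no proof of this fact (it is quoted, like the other facts in Section~\ref{definitions}, from Babai--Frankl), so there is no in-paper proof to compare against. Identifying the solution set as the coset $x_0 + \ker(A)$ of the kernel, noting that the paper's definition of affine dimension then gives $\dim(x_0+\ker(A)) = \dim\ker(A)$, and finishing with rank--nullity together with the equality of row and column rank (the paper's earlier fact) is exactly how one would fill in this citation. Two small points worth noting: you correctly flag the inconsistent case, which the paper's statement glosses over (the empty set is an affine subspace by the paper's definition, but has no well-defined dimension, so restricting to $b$ in the column space is the right reading); and the hypothesis should read $b \in \mathbb{F}^k$ rather than $b \in \mathbb{F}^n$ --- a typo in the paper's statement that your proof implicitly corrects by treating $A$ as a map $\mathbb{F}^n \to \mathbb{F}^k$.
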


\begin{definition}
A vector $x \in \mathbb{F}^n$ is \emph{isotropic} if $x \cdot x = 0$. A linear subspace $U \subseteq \mathbb{F}^n$ is \emph{totally isotropic} if $x,y \in U$ implies that $x \cdot y = 0$.
\end{definition}

\begin{fact}
For any nonzero vector $x \in \mathbb{F}^n$, the set of vectors $\{y : x \cdot y = 0\}$ is a linear subspace of $\mathbb{F}^n$ with dimension $n-1$. \QED
\end{fact}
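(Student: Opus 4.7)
The plan is to verify the two parts of the fact separately, drawing on the immediately preceding facts. Both parts are routine but the decomposition makes the appeal to the solution-set fact transparent.

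First I would show that $U := \{y \in \mathbb{F}^n : x \cdot y = 0\}$ is a linear subspace. Bilinearity of the inner product makes this immediate: the zero vector lies in $U$ since $x \cdot 0 = 0$, and if $y_1, y_2 \in U$ and $\lambda, \mu \in \mathbb{F}$, then
\[
x \cdot (\lambda y_1 + \mu y_2) \;=\; \lambda (x \cdot y_1) + \mu (x \cdot y_2) \;=\; 0,
\]
so $\lambda y_1 + \mu y_2 \in U$.

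For the dimension, I would repackage $U$ as the solution set of a linear system and invoke the preceding fact about solution sets. Write $A$ for the $1 \times n$ matrix $(x_1, x_2, \ldots, x_n)$. Then the condition $x \cdot y = 0$ is exactly $Ay = 0$, so $U$ is the solution set of this homogeneous system. Here is where the hypothesis $x \neq 0$ enters: at least one coordinate $x_i$ is nonzero, so the single row of $A$ is a nonzero vector in $\mathbb{F}^n$, giving $rk(A) = 1$. By the fact that the solution set of $Ay = b$ is an affine subspace of dimension $n - rk(A)$, we conclude that $U$ is an affine subspace of dimension $n - 1$; since $U$ already contains $0$, it is in fact a linear subspace of that dimension.

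There is essentially no obstacle here; the only point worth flagging is that the hypothesis $x \neq 0$ is used exactly once, to ensure $rk(A) = 1$ rather than $0$. Without it the ``solution set'' would be all of $\mathbb{F}^n$, having dimension $n$ rather than $n-1$.
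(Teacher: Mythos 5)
Your argument is correct. Note that the paper does not prove this statement at all: it is one of the facts in Section~\ref{definitions} quoted from Babai and Frankl's book and stamped with a QED box, so there is no in-paper proof to compare against. Your verification is the standard one and fits the paper's framework well --- closure under linear combinations gives the subspace property, and viewing $\{y : x\cdot y = 0\}$ as the solution set of the homogeneous system with the $1\times n$ matrix $A=(x_1,\ldots,x_n)$ lets you invoke the paper's earlier fact that solutions of $Ay=b$ form an affine subspace of dimension $n-rk(A)$, with $x\neq 0$ ensuring $rk(A)=1$. Your closing remark correctly isolates the only place the nonzeroness of $x$ is used, and since the set contains $0$ its affine dimension coincides with its dimension as a linear subspace, so the conclusion $n-1$ is complete.
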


\begin{fact}
\label{funfact}
In a nonsingular inner product space of dimension $n$, every totally isotropic space subspace has dimension $\leq \left\lfloor \frac{n}{2} \right\rfloor$. \QED
\end{fact}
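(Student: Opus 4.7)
The plan is to bound $U$ against its orthogonal complement. Given a totally isotropic subspace $U\subseteq V$ with $\dim U = k$, set
\[
U^\perp = \{v\in V : v\cdot u = 0 \text{ for all } u\in U\}.
\]
Because $U$ is totally isotropic, $U\subseteq U^\perp$. The whole argument then reduces to establishing the dimension identity $\dim U^\perp = n - k$, since it immediately yields $k \le \dim U^\perp = n-k$, i.e., $k \le \lfloor n/2 \rfloor$.

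To prove the identity I would stay inside the elementary vocabulary already set up in Section~\ref{definitions}. Pick a basis $u_1,\ldots,u_k$ of $U$ and extend it to a basis $e_1,\ldots,e_n$ of $V$ (with $e_i = u_i$ for $i\le k$). Writing an arbitrary $v\in V$ in these coordinates as $v = \sum_j x_j e_j$, membership in $U^\perp$ becomes the homogeneous linear system $v\cdot u_\ell = 0$ for $\ell=1,\ldots,k$, whose coefficient matrix is the $k\times n$ Gram-type matrix $M$ with entries $M_{\ell,j} = u_\ell \cdot e_j$. By the fact quoted earlier on the solution set of $Ax=b$, we get $\dim U^\perp = n - \mathrm{rk}(M)$, so everything reduces to showing $\mathrm{rk}(M) = k$.

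The main obstacle, and the only place where the nonsingularity hypothesis enters, is this last rank computation. A nontrivial dependence among the rows of $M$ would produce scalars $c_1,\ldots,c_k$, not all zero, with $\sum_\ell c_\ell (u_\ell \cdot e_j) = 0$ for every $j$. The nonzero vector $z = \sum_\ell c_\ell u_\ell \in U$ would then satisfy $z \cdot e_j = 0$ for every basis vector $e_j$, hence $z \cdot v = 0$ for all $v \in V$; nonsingularity of the inner product forces $z = 0$, contradicting the linear independence of the $u_\ell$. This gives $\mathrm{rk}(M) = k$ and completes the proof. The whole plan is thus a three-line reduction followed by one Gram-matrix rank argument, with nonsingularity used only to rule out rank deficiency in $M$.
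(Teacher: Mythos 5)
Your proof is correct, and it is the standard argument: the paper itself states Fact~\ref{funfact} without proof, quoting it from Chapter 2 of Babai and Frankl, and the proof there is exactly your route ($U \subseteq U^{\perp}$ for a totally isotropic $U$, together with $\dim U^{\perp} = n - \dim U$, which you justify correctly by the Gram-matrix rank argument using nonsingularity). Nothing is missing; the final step $2k \le n \Rightarrow k \le \lfloor n/2 \rfloor$ is immediate since $k$ is an integer.
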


\subsection{Properties of $\chi_1$}

\begin{lemma}
\label{collinear}
The coloring $\chi_1$ induces a proper edge coloring on every one-dimensional affine subspace.
\end{lemma}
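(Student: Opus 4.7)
The plan is to fix three distinct collinear points $a, b, c \in V$ and show $\chi_1(a,b) \neq \chi_1(a,c)$; this suffices since any two edges of the line incident to a common vertex arise in this form. Let $d$ be a nonzero direction of the line, so that $b = a + \lambda_b d$ and $c = a + \lambda_c d$ with $\lambda_b \neq \lambda_c$ both nonzero. A direct calculation gives
\[
  a \cdot b - a \cdot c = (\lambda_b - \lambda_c)\,(a \cdot d),
\]
so $a \cdot b = a \cdot c$ exactly when $a \cdot d = 0$. Since $p \cdot d$ is an affine function of the parameter along the line, two distinct points of the line can both be orthogonal to $d$ only if the entire line is orthogonal to $d$.

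First I would treat the easier case $a \cdot d \neq 0$, in which $a \cdot b \neq a \cdot c$ and at most one of $b, c$ is orthogonal to $d$. If neither is orthogonal, both edges have type DOT or ZERO, and are separated either by the type label (DOT vs.\ ZERO) or by the entry $f_T = x \cdot y$ (for two DOTs); two ZEROs are ruled out by $a \cdot b \neq a \cdot c$. If exactly one of $b, c$ is orthogonal, the corresponding edge carries an UP or DOWN type label while the other has a DOT or ZERO label, so the type coordinate already distinguishes the two colors.

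The main obstacle is the case $a \cdot d = 0$, in which $a \cdot b = a \cdot c = a \cdot a$ and the inner product no longer separates the two edges. A case check on whether $d$ is isotropic shows that both edges $\{a, b\}$ and $\{a, c\}$ carry UP or DOWN type labels, with the specific subscript determined by the ordering of the endpoints and by the agreement of their first coordinates. A UP-versus-DOWN split, or a subscript $1$ versus $2$ split, is visible in the type coordinate of $\chi_1$. In the remaining subcases both edges share the same UP$_i$ or DOWN$_i$ label; equality of $f_T$ then forces $b_1 = c_1$ (and also $b_2 = c_2$ when $i = 2$). Since $b - c$ is parallel to $d$, this pins down $d_1 = 0$ in the $i = 1$ case, whereupon $b - a \parallel d$ yields $a_1 = b_1$, contradicting the subscript-$1$ condition $a_1 \neq b_1$. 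In the $i = 2$ case it pins down $d = (0, 0, d_3)$ with $d_3 \neq 0$, giving $a \cdot d = a_3 d_3 \neq 0$ because $a_3, d_3 \in \mathbb{F}_q^{\text{*}}$, contradicting $a \cdot d = 0$. The nonzero-coordinate hypothesis $V = (\mathbb{F}_q^{\text{*}})^3$ is precisely what drives this last contradiction and justifies the seemingly ad-hoc choice of $f_T$ in the UP/DOWN cases.
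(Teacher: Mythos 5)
Your proof is correct and takes essentially the same route as the paper's: assuming two incident edges of the line share a color, you separate the DOT/ZERO types via linearity of the inner product along the line and kill the UP/DOWN types using the coordinate-sum values $f_T$ together with the fact that all coordinates lie in $\mathbb{F}_q^{\text{*}}$. The only cosmetic difference is organizational—you parametrize by a direction $d$ and split on whether $a \cdot d = 0$, while the paper writes $c = \lambda a + (1-\lambda)b$ and cases directly on the type $T$—but the underlying computations coincide.
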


\begin{proof}
Let $a,b,c \in \mathbb{F}_q^3$ be three distinct vectors in a one-dimensional affine subspace. Then there exists some $\lambda \in \mathbb{F}_q$ such that $c = \lambda a + (1-\lambda)b$. Suppose towards a contradiction that $\chi_1(a,b) = \chi_1(a,c)$, and let $T=T(a,b)=T(a,c)$. If $T \in \{\text{ZERO},\text{DOT}\}$, then \[a \cdot b = a \cdot (\lambda a + (1-\lambda)b).\] So $\lambda a \cdot (a - b) = 0$. Since $c \neq b$, then $\lambda \neq 0$. Therefore, $a \cdot (a-b) = 0$. But this contradicts the assumption that $T \in \{\text{ZERO},\text{DOT}\}$.

If $T \in \{\text{UP}_1,\text{DOWN}_1\}$, then $f_T(a,b)=f_T(a,c)$ gives \[a_1 + b_1 = a_1 + \lambda a_1 + (1 - \lambda)b_1.\] So $b_1 = a_1$, a contradiction since $T \in \{\text{UP}_1,\text{DOWN}_1\}$ implies that $a_1 \neq b_1$. Similarly, if $T \in \{\text{UP}_2,\text{DOWN}_2\}$, then $a_2 = b_2$ by the same argument and $a_1 = b_1$ by definition. But then either $a_3(a_3-b_3)=0$ or $b_3(b_3-a_3)=0$. Both cases imply that $a_3=b_3$. So $a=b$, a contradiction.
\end{proof}

\begin{definition}
Given a vertex $a \in V$ and an edge-color $A$, let \[N_A(a) = \{x : \chi_1(a,x) = A\}\] be the $A$-neighborhood of $a$
\end{definition}

\begin{observation}
\label{mainobs}
Given a vector $a \in V$ and a color $A=(T,\alpha,i)$, the vectors in $N_A(a)$ all belong to the two-dimensional affine subspace defined by $\{x: f_T(a,x) = \alpha\}$. In particular, this plane can be defined as the solution space to either $a \cdot x = \alpha$ when $T = \text{DOT}$, $(1,0,0) \cdot x = \alpha - a_1$ when $T \in \{\text{ZERO},\text{UP}_1,\text{DOWN}_1\}$, and $(0,1,0) \cdot x = \alpha - a_2$ when $T \in \{\text{UP}_2,\text{DOWN}_2\}$.
\end{observation}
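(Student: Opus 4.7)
The plan is to unpack the definition of $\chi_1$ directly; the observation is essentially a translation of the definitions of $T$ and $f_T$. Fix $a \in V$ and a color $A = (T,\alpha,i)$, and take any $x \in N_A(a)$. By definition, $\chi_1(a,x) = A$ forces $T(a,x) = T$ and $f_T(a,x) = \alpha$, which immediately gives the containment $N_A(a) \subseteq \{x : f_T(a,x) = \alpha\}$.

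It remains to identify the equation $f_T(a,x) = \alpha$ type by type and verify that it cuts out a two-dimensional affine subspace. When $T = \text{DOT}$, $f_T(a,x) = a \cdot x$, so the defining equation is $a \cdot x = \alpha$; since $a \in (\mathbb{F}_q^{\ast})^3$, the vector $a$ is nonzero, so this is a single linear equation of rank one. When $T \in \{\text{ZERO}, \text{UP}_1, \text{DOWN}_1\}$, $f_T(a,x) = a_1 + x_1$, so the equation becomes $(1,0,0) \cdot x = \alpha - a_1$. When $T \in \{\text{UP}_2, \text{DOWN}_2\}$, $f_T(a,x) = a_2 + x_2$, so the equation becomes $(0,1,0) \cdot x = \alpha - a_2$. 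In each of the three cases the coefficient vector is nonzero, so by the rank-nullity fact from Section~\ref{definitions} the solution set is an affine subspace of dimension $3 - 1 = 2$.

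There is really no obstacle here: the only substantive input is that the vectors $a$, $(1,0,0)$, and $(0,1,0)$ are all nonzero (using $a \in (\mathbb{F}_q^{\ast})^3$ for the first), which ensures that the single linear equation defining each candidate plane has rank one. I would therefore present this as a short, direct proof immediately following the statement.
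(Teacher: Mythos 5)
Your proof is correct and is exactly the reasoning the paper intends: the statement is labeled an Observation and given no written proof precisely because it follows immediately from unpacking the definitions of $T$, $f_T$, and $\chi_1$, which is what you do. Your added remark that $a$, $(1,0,0)$, and $(0,1,0)$ are nonzero (so each equation has rank one and the solution set is a two-dimensional affine subspace) is the only substantive check, and it is handled correctly.
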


In certain cases, we can actually say something a little stronger.

\begin{lemma}
Given a vector $a \in V$, and a color $A=(T,\alpha,i)$, the vectors of $N_A(a)$ all belong to a one-dimensional affine subspace if one of the following three cases hold:
\begin{enumerate}
\item $T \in \{\text{ZERO},\text{UP}_2,\text{DOWN}_2\}$;
\item $T = \text{UP}_1$ and $a_1 < \alpha - a_1$;
\item $T = \text{DOWN}_1$ and $a_1 > \alpha - a_1$.
\end{enumerate}
\end{lemma}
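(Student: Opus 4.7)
The plan is to leverage Observation~\ref{mainobs}, which already places $N_A(a)$ inside the 2-dimensional affine subspace cut out by $f_T(a,x)=\alpha$, and then produce in each of the three cases an additional linear equation forced by the type $T$ that is independent of the $f_T$-equation. Two independent linear constraints in $\mathbb{F}_q^3$ cut out a 1-dimensional affine subspace, which is the desired conclusion.

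For case (1) with $T=\text{ZERO}$, the extra constraint is the defining condition of ZERO itself, $a\cdot x=0$, which is linear in $x$; its independence from $f_T(a,x)=\alpha$ (which reads $x_1=\alpha-a_1$) follows because $a$ is not a scalar multiple of $e_1=(1,0,0)$, a consequence of $a_2\neq 0$ since $a\in(\mathbb{F}_q^*)^3$. When $T\in\{\text{UP}_2,\text{DOWN}_2\}$ the type definition itself forces $x_1=a_1$ (regardless of which of $a,x$ is lexicographically smaller), and $f_T$ gives $x_2=\alpha-a_2$; these two coordinate equations are manifestly independent and pin $x$ to a line varying only in the third coordinate.

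For cases (2) and (3), the $f_T$-equation reads $x_1=\alpha-a_1$, so the hypothesis on $a_1$ versus $\alpha-a_1$ determines the lexicographic order of $a$ and $x$ via their first coordinates. In case (2), $x_1>a_1$, so $a<x$, and the UP$_1$ condition applied to the ordered pair $(a,x)$ yields $a\cdot x=a\cdot a$. In case (3), $x_1<a_1$, so $x<a$, and the DOWN$_1$ condition applied to the ordered pair $(x,a)$ yields $x\cdot a=a\cdot a$. Either way, $x$ satisfies the linear equation $a\cdot x=a\cdot a$, which is independent from $x_1=\alpha-a_1$ by the same $a\not\parallel e_1$ argument, completing the dimension count.

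The main subtlety, rather than any computational difficulty, is handling the asymmetry of the UP$_1$/DOWN$_1$ definitions: one must first use the $f_T$-equation together with the hypothesis to pin down the lexicographic order of $a$ and $x$, and only then apply the appropriate one-sided form of the type definition. The conditions in cases (2) and (3) are designed precisely to make this disambiguation possible; without them one could have $x_1=\alpha-a_1$ equal to $a_1$ or sitting on either side, and the forced inner-product equation would not be uniform across $N_A(a)$.
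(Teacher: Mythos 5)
Your proof is correct and follows essentially the same route as the paper: in each case you exhibit a second linear equation ($a\cdot x=0$, $x_1=a_1$, or $a\cdot x=a\cdot a$) independent of the $f_T$-constraint, so $N_A(a)$ lies in the solution set of a rank-two system, a one-dimensional affine subspace. In fact you spell out the one point the paper leaves implicit, namely that the hypotheses in cases (2) and (3) fix the lexicographic order of $a$ and $x$ so the asymmetric UP$_1$/DOWN$_1$ definitions both yield $a\cdot x=a\cdot a$.
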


\begin{proof}
In the first case, if $T=\text{ZERO}$, then every $x \in N_A(a)$ must satisfy the system of linear equations
\begin{align*}
a \cdot x &= 0\\
(1,0,0) \cdot x &= \alpha-a_1.
\end{align*}
Since $a$ contains no zero components, then the rank of $\{a,(1,0,0)\}$ is two. Therefore, the solution space must be a one-dimensional affine subspace. If $T \in \{\text{UP}_2,\text{DOWN}_2\}$, then every $x \in N_A(a)$ must satisfy the system
\begin{align*}
(1,0,0) \cdot x &= a_1\\
(0,1,0) \cdot x &= \alpha-a_2.
\end{align*}
Since $(1,0,0)$ and $(0,1,0)$ are linearly independent, then, as before, the set of solutions is a one-dimensional affine subspace.

In each of the other two cases, we see that every $x \in N_A(a)$ must satisfy the system
\begin{align*}
a \cdot x &= a \cdot a\\
(1,0,0) \cdot x &= \alpha-a_1.
\end{align*}
As before, the solution space must be a one-dimensional affine subspace.
\end{proof}

Therefore, we immediately get the following corollary by Lemma~\ref{collinear}.

\begin{corollary}
\label{mainsub}
Let $a,b,c,d \in V$ be four distinct vertices such that \[\chi_1(a,b)=\chi_1(a,c)=\chi_1(a,d) = (T,\alpha,i).\] The set of vertices $\{b,c,d\}$ span three distinct edge colors under $\chi_1$ if any of the following are true:
\begin{enumerate}
\item $T \in \{\text{ZERO},\text{UP}_2,\text{DOWN}_2\}$;
\item $T = \text{UP}_1$ and $a_1<\alpha-a_1$;
\item $T = \text{DOWN}_1$ and $a_1 > \alpha - a_1$.
\end{enumerate}
\end{corollary}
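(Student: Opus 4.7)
The plan is simply to compose the preceding lemma with Lemma~\ref{collinear}. By hypothesis, $b,c,d$ all lie in $N_A(a)$ where $A=(T,\alpha,i)$. In each of the three enumerated cases, the previous lemma already guarantees that $N_A(a)$ is contained in some one-dimensional affine subspace $\ell \subseteq \mathbb{F}_q^3$. Hence $b,c,d$ are three distinct collinear points on $\ell$.

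The key step is then to invoke Lemma~\ref{collinear}: the coloring $\chi_1$ restricted to any one-dimensional affine subspace is a proper edge coloring. Applied to $\ell$, this means that any two edges among $\{b,c,d\}$ that share a vertex must receive different colors under $\chi_1$. Since the three edges $bc$, $bd$, $cd$ of the triangle on $\{b,c,d\}$ are pairwise incident, they must all receive pairwise distinct colors, which is the conclusion of the corollary.

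I do not anticipate any genuine obstacle, since all of the real content has already been established: the previous lemma supplies the collinearity of $N_A(a)$ in exactly these three cases, and Lemma~\ref{collinear} supplies the proper-coloring property on any such line. The only minor things to note in passing are that $b,c,d$ are distinct by hypothesis (so they are three genuinely different points of $\ell$), and that the three cases of the corollary are in bijection with the three cases of the previous lemma, so no additional case analysis is needed.
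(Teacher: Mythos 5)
Your proposal is correct and is exactly the paper's argument: the paper derives the corollary ``immediately'' by combining the preceding lemma (collinearity of $N_A(a)$ in precisely these three cases) with Lemma~\ref{collinear} (proper edge coloring on every one-dimensional affine subspace), just as you do. The observation that the three pairwise incident edges of the triangle on $\{b,c,d\}$ must then receive distinct colors completes it; no further detail is needed.
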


\begin{figure}
\centering
 \begin{tikzpicture}[scale=1]

\draw[very thick, black] (1.0, 0.0) -- (-0.80901699437494734, 0.58778525229247325);
\draw[very thick, black] (1.0, 0.0) -- (-0.80901699437494745, -0.58778525229247303);
\draw[very thick, black] (1.0, 0.0) -- (0.30901699437494723, -0.95105651629515364);
\draw[very thick, red] (0.30901699437494745, 0.95105651629515353) -- (-0.80901699437494734, 0.58778525229247325);
\draw[very thick, red] (0.30901699437494745, 0.95105651629515353) -- (-0.80901699437494745, -0.58778525229247303);
\draw[very thick, red] (0.30901699437494745, 0.95105651629515353) -- (0.30901699437494723, -0.95105651629515364);

\filldraw [black] (1.0, 0.0) circle (1pt);
\node [right] at (1.0, 0.0) {$a$};
\filldraw [black] (0.30901699437494745, 0.95105651629515353) circle (1pt);
\node [above] at (0.30901699437494745, 0.95105651629515353) {$b$};
\filldraw [black] (-0.80901699437494734, 0.58778525229247325) circle (1pt);
\node [left] at (-0.80901699437494734, 0.58778525229247325) {$c$};
\filldraw [black] (-0.80901699437494745, -0.58778525229247303) circle (1pt);
\node [left] at (-0.80901699437494745, -0.58778525229247303) {$d$};
\filldraw [black] (0.30901699437494723, -0.95105651629515364) circle (1pt);
\node [below] at (0.30901699437494723, -0.95105651629515364) {$e$};
\end{tikzpicture}
\caption{}
\label{gen}
\end{figure}

\begin{lemma}
\label{main}
Let $a,b,c,d,e \in V$ be vectors such that $\{a,b\}$ is linearly independent, $\chi_1(a,c)=\chi_1(a,d)=\chi_1(a,e)$, and $\chi_1(b,c)=\chi_1(b,d)=\chi_1(b,e)$ (see Figure~\ref{gen}). Then the set $\{c,d,e\}$ spans three distinct edge colors.
\end{lemma}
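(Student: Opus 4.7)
The plan is to reduce everything to showing that $\{c,d,e\}$ lies on a one-dimensional affine subspace of $\mathbb{F}_q^3$; Lemma~\ref{collinear} then immediately forces the three edges of the triangle on $\{c,d,e\}$ to receive pairwise distinct $\chi_1$-colors. Write $\chi_1(a,c) = A = (T_A,\alpha,i_A)$ and $\chi_1(b,c) = B = (T_B,\beta,i_B)$, and denote by $P_A$ and $P_B$ the two-dimensional affine subspaces containing $N_A(a)$ and $N_B(b)$ respectively, as given by Observation~\ref{mainobs}. Then $c,d,e \in P_A \cap P_B$.

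If either $T_A$ (with respect to $a$) or $T_B$ (with respect to $b$) meets one of the three hypotheses of Corollary~\ref{mainsub}, then that corollary finishes the proof at once. So assume neither does; unpacking the definitions shows $T_A,T_B \in \{\text{DOT},\text{UP}_1,\text{DOWN}_1\}$, and in each $\text{UP}_1/\text{DOWN}_1$ case the ``wrong'' side of the inequality forces $c,d,e$ to be the special vertices of their edges with $a$ (and similarly with $b$). This will give us the crucial quadric identities $x \cdot x = x \cdot a$ and $x \cdot x = x \cdot b$ for $x \in \{c,d,e\}$ in those subcases.

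Next, I compare $P_A$ and $P_B$. If $P_A \neq P_B$, then two distinct two-dimensional affine subspaces of $\mathbb{F}_q^3$ whose intersection contains $c$ must meet in a one-dimensional affine subspace, so $\{c,d,e\}$ is collinear and we are done. Assume then $P_A = P_B$, and split by the pair $(T_A, T_B)$. When $T_A = T_B = \text{DOT}$, the equality of the hyperplanes $a \cdot x = \alpha$ and $b \cdot x = \beta$ forces $a \parallel b$, contradicting linear independence of $\{a,b\}$. When one of $T_A,T_B$ is $\text{DOT}$ and the other lies in $\{\text{UP}_1,\text{DOWN}_1\}$ (hard side), equality of the planes $\{a \cdot x = \alpha\}$ and $\{(1,0,0)\cdot x = \beta - b_1\}$ forces the $\text{DOT}$-side vector to be parallel to $(1,0,0)$, which is impossible because every vector of $V = (\mathbb{F}_q^*)^3$ has three nonzero coordinates. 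Finally, when both $T_A$ and $T_B$ lie in $\{\text{UP}_1,\text{DOWN}_1\}$ on the hard side, subtracting the quadric identities $x \cdot x = x \cdot a$ and $x \cdot x = x \cdot b$ gives the linear relation $x \cdot (a-b) = 0$ for each $x \in \{c,d,e\}$; intersecting with $P_A = \{x_1 = \alpha - a_1\}$, either $a-b$ and $(1,0,0)$ are linearly independent (and the common solution set is one-dimensional, so we are done), or $a-b$ is parallel to $(1,0,0)$, in which case the equation collapses to $(a_1-b_1)x_1 = 0$ and forces $x_1 = 0$, contradicting $c,d,e \in (\mathbb{F}_q^*)^3$.

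The main obstacle is the last subcase, where the two linear planes $P_A$ and $P_B$ coincide and the entire reduction must come from the hidden quadric structure of $\text{UP}_1/\text{DOWN}_1$. The key trick is that subtracting two copies of the quadric $x \cdot x = x \cdot (\cdot)$ linearizes it into the single hyperplane equation $x \cdot (a-b) = 0$, which, together with the ambient structure of $V$ and the linear independence of $\{a,b\}$, closes the case.
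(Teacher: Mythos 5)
Your proposal is correct and follows essentially the same route as the paper: reduce to Corollary~\ref{mainsub} and the planes of Observation~\ref{mainobs}, use linear independence of the relevant normal vectors to force $\{c,d,e\}$ onto a line (then apply Lemma~\ref{collinear}), and handle the remaining case where both colors are hard-side $\text{UP}_1/\text{DOWN}_1$ by subtracting the quadric identities $x\cdot(x-a)=0$ and $x\cdot(x-b)=0$ to get $(a-b)\cdot x=0$. The only cosmetic differences are your initial dichotomy $P_A=P_B$ versus $P_A\neq P_B$ and ending the degenerate subcase with $x_1=0$ rather than the paper's $a=b$ contradiction, which is the same computation.
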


\begin{proof}
Let $\chi_1(a,c)=\chi_1(a,d)=\chi_1(a,e) = A$ and $\chi_1(b,c)=\chi_1(b,d)=\chi_1(b,e) = B$. The result is immediate if either pair $(a,A)$ or $(b,B)$ satisfies the conditions listed in Corollary~\ref{mainsub}. So assume not. If $A = (T_a,\alpha,i)$, then by Observation~\ref{mainobs} we know that $c$, $d$, and $e$ must either satisfy $a \cdot x = \alpha$ or $(1,0,0) \cdot x = \alpha-a_1$. Similarly, if $B = (T_b,\beta,j)$, then $c$, $d$, and $e$ must either satisfy $b \cdot x = \beta$ or $(1,0,0) \cdot x = \beta-b_1$.

Since the sets $\{a,b\}$, $\{a,(1,0,0)\}$, and $\{(1,0,0),b\}$ are all linearly independent, then every case gives us the result immediately except when $T_a,T_b \in \{\text{UP}_1,\text{DOWN}_1\}$. Since we assume that none of the cases from Corollary~\ref{mainsub} hold, then this can only happen when $x \cdot (x - a) = x \cdot (x-b) = 0$ for $x=c,d,e$. In this case, $c$, $d$, and $e$ all satisfy the two linear equations,
\begin{align*}
(a-b) \cdot x &= 0\\
(1,0,0) \cdot x &= \alpha-a_1.
\end{align*}
Hence, $c$, $d$, and $e$ are affine independent, and the result follows from Lemma~\ref{collinear} unless \[a_2-b_2 = a_3 - b_3 = 0.\] But if this is true, then $c \cdot(c-a) = c \cdot (c-b)$ implies that $a=b$, a contradiction.
\end{proof}

\subsection{The coloring $\chi_2$}

Let $U \subseteq \mathbb{F}_q^3$ be a two-dimensional linear subspace. Let $G_U$ be an auxiliary graph where $V(G_U)$ is the set of non-isotropic vectors in $U$, and \[xy \in E(G_U) \iff x \cdot y = 0.\] We wish to show that $G_U$ is bipartite. Note that $x \cdot y = 0$ implies that $\alpha x \cdot \beta y = 0$ for any $\alpha,\beta \in \mathbb{F}_q$. Suppose that $x \cdot z = 0$ for some $z \in V(G_U)$ such that $z \neq \beta y$ for any $\beta \in \mathbb{F}_q$. Then the intersection between  $U$ and the two-dimensional linear subspace orthogonal to $x$ must also be a two-dimensional linear subspace. Therefore, $x$ is contained in its own orthogonal linear subspace. So $x$ is isotropic, a contradiction. Hence, $G_U$ is comprised of disjoint complete bipartite graphs and so is itself bipartite.

For each two-dimensional linear subspace $U$, we label the vertices of $G_U$ with $A_U$ and $B_U$ depending on their part in the bipartition, and then label all isotropic vectors in $U$ with $A_U$ as well.

For any two-dimensional linear subspace $U \subseteq \mathbb{F}_q^3$ and any $x \in U$ we define \[S(x,U) = \left\{ \begin{array}{ll}
A & \quad x \in A_U\\
B & \quad x \in B_U\\
\end{array} \right. \]

For a given vector $a \in V$, and a given color type $T$, define \[a_T = \left\{ \begin{array}{ll}
a & \quad T=\text{DOT}\\
(1,0,0) & \quad T \in \{\text{UP}_1,\text{DOWN}_1,\text{ZERO}\}\\
(0,1,0) & \quad T \in \{\text{UP}_2,\text{DOWN}_2\}
\end{array} \right. \] and let \[U_{a,T} = \{x : a_T \cdot x = 0\}.\] For convenience, let \[a_b = \left\{ \begin{array}{ll}
0 & \quad a_T \cdot a_T = 0\\
(a_T \cdot b)(a_T \cdot a_T)^{-1}a_T & \quad a_T \cdot a_T \neq 0
\end{array} \right. \] for any vectors $a$ and $b$ where $T=T(a,b)$.

Now we can define the second part of the MIP coloring. For any two vectors, $a<b$ with $T=T(a,b)$, let \[\chi_2(a,b) = \left(S(a - b_a,U_{b,T}), S(b - a_b,U_{a,T}) \right).\]

\section{Combining the colorings}
\label{elimination}

Let $n=(q-1)^3$ where $q$ is an odd prime power. To each $\alpha \in \mathbb{F}_q$ we associate the unique element $\alpha' \in \{0,1\}^{\left\lceil \log q \right\rceil}$ which represents in binary the rank of $\alpha$ under the linear order given to the elements of $\mathbb{F}_q$ in Section~\ref{MIP}. Let $\beta$ be the minimum positive integer for which \[3\left\lceil \log q \right\rceil \leq \beta^2. \] We associate each of the $n$ vertices of $K_n$ with a unique vector in $\left(\mathbb{F}^{\text{*}}_q \right)^3$ as in Section~\ref{MIP}. To each vertex $(x_1,x_2,x_3)$, we  associate $(x_1',x_2',x_3',0) \in \{0,1\}^{\beta^2}$ as well, where for each $i$, $x_i'$ is the binary representation of the rank of $x_i$, and $0$ denotes a string of $\beta^2 -3 \left\lceil \log q \right\rceil$ zeros.

Our coloring of the edges of $K_n$ is simply the product of the modified CFLS coloring $\varphi$ and the MIP coloring $\chi$, \[C = \varphi \times \chi.\] Since \[\beta = \Theta \left( \sqrt{3\log q} \right) =  \Theta \left( \sqrt{\log n} \right),\]  it follows that the number of colors used in this combined coloring is at most \[48q \beta 2^{3 \beta} = n^{1/3} 2^{O\left(\sqrt{\log n} \log \log n \right)}\] colors. This bound on the number of colors generalizes to all $n$ by the standard density of primes argument \cite{perelli1984}.

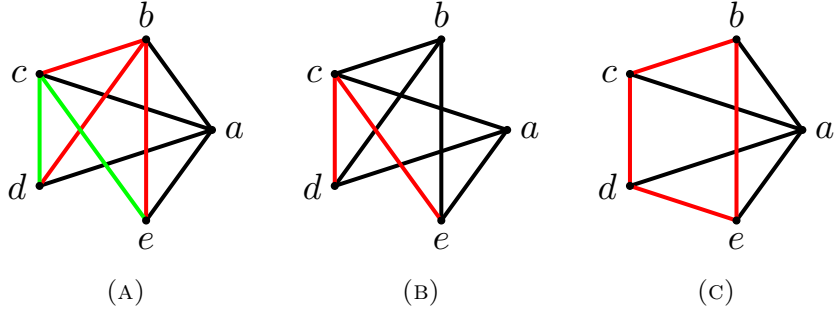
\begin{figure}
     \begin{subfigure}[b]{0.30\textwidth}
          \centering
          \resizebox{\linewidth}{!}{
          \begin{tikzpicture}
\draw[very thick, black] (1.0, 0.0) -- (0.30901699437494745, 0.95105651629515353);
\draw[very thick, black] (1.0, 0.0) -- (-0.80901699437494734, 0.58778525229247325);
\draw[very thick, black] (1.0, 0.0) -- (-0.80901699437494745, -0.58778525229247303);
\draw[very thick, black] (1.0, 0.0) -- (0.30901699437494723, -0.95105651629515364);
\draw[very thick, red] (0.30901699437494745, 0.95105651629515353) -- (-0.80901699437494734, 0.58778525229247325);
\draw[very thick, red] (0.30901699437494745, 0.95105651629515353) -- (-0.80901699437494745, -0.58778525229247303);
\draw[very thick, red] (0.30901699437494745, 0.95105651629515353) -- (0.30901699437494723, -0.95105651629515364);
\draw[very thick, green] (-0.80901699437494734, 0.58778525229247325) -- (-0.80901699437494745, -0.58778525229247303);
\draw[very thick, green] (-0.80901699437494734, 0.58778525229247325) -- (0.30901699437494723, -0.95105651629515364);

\filldraw [black] (1.0, 0.0) circle (1pt);
\node [right] at (1.0, 0.0) {$a$};
\filldraw [black] (0.30901699437494745, 0.95105651629515353) circle (1pt);
\node [above] at (0.30901699437494745, 0.95105651629515353) {$b$};
\filldraw [black] (-0.80901699437494734, 0.58778525229247325) circle (1pt);
\node [left] at (-0.80901699437494734, 0.58778525229247325) {$c$};
\filldraw [black] (-0.80901699437494745, -0.58778525229247303) circle (1pt);
\node [left] at (-0.80901699437494745, -0.58778525229247303) {$d$};
\filldraw [black] (0.30901699437494723, -0.95105651629515364) circle (1pt);
\node [below] at (0.30901699437494723, -0.95105651629515364) {$e$};
\end{tikzpicture}
          }
          \caption{}
          \label{fig:E}
     \end{subfigure}
     \begin{subfigure}[b]{0.30\textwidth}
          \centering
          \resizebox{\linewidth}{!}{
          \begin{tikzpicture}

\draw[very thick, black] (1.0, 0.0) -- (-0.80901699437494734, 0.58778525229247325);
\draw[very thick, black] (1.0, 0.0) -- (-0.80901699437494745, -0.58778525229247303);
\draw[very thick, black] (1.0, 0.0) -- (0.30901699437494723, -0.95105651629515364);
\draw[very thick, black] (0.30901699437494745, 0.95105651629515353) -- (-0.80901699437494734, 0.58778525229247325);
\draw[very thick, black] (0.30901699437494745, 0.95105651629515353) -- (-0.80901699437494745, -0.58778525229247303);
\draw[very thick, black] (0.30901699437494745, 0.95105651629515353) -- (0.30901699437494723, -0.95105651629515364);
\draw[very thick, red] (-0.80901699437494734, 0.58778525229247325) -- (-0.80901699437494745, -0.58778525229247303);
\draw[very thick, red] (-0.80901699437494734, 0.58778525229247325) -- (0.30901699437494723, -0.95105651629515364);

\filldraw [black] (1.0, 0.0) circle (1pt);
\node [right] at (1.0, 0.0) {$a$};
\filldraw [black] (0.30901699437494745, 0.95105651629515353) circle (1pt);
\node [above] at (0.30901699437494745, 0.95105651629515353) {$b$};
\filldraw [black] (-0.80901699437494734, 0.58778525229247325) circle (1pt);
\node [left] at (-0.80901699437494734, 0.58778525229247325) {$c$};
\filldraw [black] (-0.80901699437494745, -0.58778525229247303) circle (1pt);
\node [left] at (-0.80901699437494745, -0.58778525229247303) {$d$};
\filldraw [black] (0.30901699437494723, -0.95105651629515364) circle (1pt);
\node [below] at (0.30901699437494723, -0.95105651629515364) {$e$};
\end{tikzpicture}
          }
          \caption{}
          \label{fig:F}
     \end{subfigure}
     \begin{subfigure}[b]{0.30\textwidth}
          \centering
          \resizebox{\linewidth}{!}{
          \begin{tikzpicture}
\draw[very thick, black] (1.0, 0.0) -- (0.30901699437494745, 0.95105651629515353);
\draw[very thick, black] (1.0, 0.0) -- (-0.80901699437494734, 0.58778525229247325);
\draw[very thick, black] (1.0, 0.0) -- (-0.80901699437494745, -0.58778525229247303);
\draw[very thick, black] (1.0, 0.0) -- (0.30901699437494723, -0.95105651629515364);
\draw[very thick, red] (0.30901699437494745, 0.95105651629515353) -- (-0.80901699437494734, 0.58778525229247325);

\draw[very thick, red] (0.30901699437494745, 0.95105651629515353) -- (0.30901699437494723, -0.95105651629515364);
\draw[very thick, red] (-0.80901699437494734, 0.58778525229247325) -- (-0.80901699437494745, -0.58778525229247303);

\draw[very thick, red] (-0.80901699437494745, -0.58778525229247303) -- (0.30901699437494723, -0.95105651629515364);
\filldraw [black] (1.0, 0.0) circle (1pt);
\node [right] at (1.0, 0.0) {$a$};
\filldraw [black] (0.30901699437494745, 0.95105651629515353) circle (1pt);
\node [above] at (0.30901699437494745, 0.95105651629515353) {$b$};
\filldraw [black] (-0.80901699437494734, 0.58778525229247325) circle (1pt);
\node [left] at (-0.80901699437494734, 0.58778525229247325) {$c$};
\filldraw [black] (-0.80901699437494745, -0.58778525229247303) circle (1pt);
\node [left] at (-0.80901699437494745, -0.58778525229247303) {$d$};
\filldraw [black] (0.30901699437494723, -0.95105651629515364) circle (1pt);
\node [below] at (0.30901699437494723, -0.95105651629515364) {$e$};
\end{tikzpicture}
          }
          \caption{}
          \label{fig:G}
     \end{subfigure}
     \caption{Three configurations not avoided by the modified CFLS coloring.}
\label{badguys}
 \end{figure}

\subsection{The first two configurations}

\begin{lemma}
Any distinct vertices $a,b,c,d,e \in V$ for which $C(a,c) = C(a,d) = C(a,e)$, $C(b,c) = C(b,d) = C(b,e)$, and $C(a,c) \neq C(b,c)$ (see Figure~\ref{gen}) span at least five distinct edge colors.
\end{lemma}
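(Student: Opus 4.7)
The plan is to show that the ten edges of the $K_5$ on $\{a,b,c,d,e\}$ carry at least five distinct $C$-colors. Writing $A:=C(a,c)$ and $B:=C(b,c)$, the hypothesis gives $A\ne B$, and the six edges from $a$ or $b$ to $\{c,d,e\}$ contribute exactly the two colors $A$ and $B$. It therefore suffices to prove that the three colors $C(c,d)$, $C(c,e)$, $C(d,e)$ are pairwise distinct and all lie outside $\{A,B\}$.

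To rule out $C(c,d), C(c,e), C(d,e) \in \{A,B\}$, I would invoke Lemma~\ref{odd}. If, for instance, $C(c,d)=A$, then since $C(a,c)=C(a,d)=A$ the triangle on $\{a,c,d\}$ is monochromatic under $C$, and hence also under the CFLS projection $\varphi_1$, contradicting the absence of monochromatic $3$-cycles in $\varphi_1$. The analogous argument applied to the triangles $\{b,c,d\}$, $\{a,c,e\}$, $\{b,c,e\}$, $\{a,d,e\}$, $\{b,d,e\}$ handles the other five possible identifications.

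For pairwise distinctness, I would note that the hypotheses projected onto the $\chi_1$-coordinate yield $\chi_1(a,c)=\chi_1(a,d)=\chi_1(a,e)$ and $\chi_1(b,c)=\chi_1(b,d)=\chi_1(b,e)$. When $\{a,b\}$ is linearly independent, Lemma~\ref{main} applies directly to these $\chi_1$-data and produces three pairwise distinct $\chi_1$-colors on $\{cd,ce,de\}$, which lifts to pairwise distinctness of $C(c,d), C(c,e), C(d,e)$. Combined with the previous step, this gives the five distinct colors $A,B,C(c,d),C(c,e),C(d,e)$, completing the proof in the linearly independent case.

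The main obstacle is the linearly dependent case $b=\mu a$ with $\mu\in\mathbb{F}_q^\ast\setminus\{1\}$, where Lemma~\ref{main}'s argument fails because the two neighborhood planes $N_{A_1}(a)$ and $N_{B_1}(b)$ from Observation~\ref{mainobs} collapse to the same plane instead of meeting in a line. To handle this case I would use the identities $b\cdot x=\mu\,(a\cdot x)$ and $b\cdot b=\mu^{2}(a\cdot a)$ to relate the types $T(a,x)$ and $T(b,x)$ for $x\in\{c,d,e\}$, and argue that the requirement $A_1\ne B_1$ (necessary since the $\varphi$-coordinates agree on the respective stars only through a shared plane) combined with the coincidence of the two planes forces one of the special situations of Corollary~\ref{mainsub} to occur. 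In that situation $c,d,e$ all lie on a single one-dimensional affine subspace, so Lemma~\ref{collinear} supplies the three distinct $\chi_1$-colors needed, and the five-color conclusion follows as before.
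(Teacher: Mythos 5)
Your treatment of the linearly independent case matches the paper: Lemma~\ref{odd} keeps the two star colors off the triangle $\{c,d,e\}$, and Lemma~\ref{main} makes that triangle rainbow, giving five colors. The gap is in the dependent case $b=\lambda a$. Your plan is to show that the coincidence of the two neighborhood planes from Observation~\ref{mainobs} forces one of the special situations of Corollary~\ref{mainsub}, hence collinearity of $c,d,e$ and a rainbow triangle via Lemma~\ref{collinear}. That implication is false. Take $T(a,x)=T(b,x)=\text{DOT}$ for $x\in\{c,d,e\}$: then $a\cdot c=a\cdot d=a\cdot e=\alpha$ automatically gives $b\cdot x=\lambda\alpha$ for all three, so the two planes $\{x:a\cdot x=\alpha\}$ and $\{x:b\cdot x=\lambda\alpha\}$ are literally the same plane even though the $\chi_1$-colors differ, and DOT is not among the cases of Corollary~\ref{mainsub}; nothing forces $c,d,e$ onto a line, and $\chi_1$ alone cannot then guarantee three distinct colors on the triangle (Lemma~\ref{main} genuinely needs independence of $\{a,b\}$). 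In fact your opening reduction --- ``it suffices to prove the triangle is rainbow'' --- is stronger than what the construction delivers in this case, which is why the paper does not attempt it. (A smaller issue: your assertion that $C(a,c)\neq C(b,c)$ forces the $\chi_1$-parts to differ is unjustified as stated; it happens to be true when $b=\lambda a$, but by the separate computation comparing $a\cdot x$ with $\lambda a\cdot x$, or $a_1$ with $\lambda a_1$, not by the plane argument you sketch.)

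The missing idea is to use the tenth edge $ab$ rather than insist on a rainbow triangle. In the dependent case one argues: the triangle $\{c,d,e\}$ avoids the two star colors (Lemma~\ref{odd}) and is not monochromatic (Lemma~\ref{odd} again), so it already contributes two new colors; if $C(a,b)$ equaled a triangle color, then $\varphi_1(a,b)=\varphi_1(c,d)$ together with $\varphi_1(a,c)=\varphi_1(a,d)$ is exactly the configuration forbidden by Lemma~\ref{forbidden}; and if $C(a,b)$ equaled $C(a,c)$ or $C(b,c)$, then the $\delta$-component of $\chi_1$ forces $c,d,e$ (like $b$) to be scalar multiples of $a$, so all five vertices lie on the line spanned by $a$, where Lemma~\ref{collinear} gives a proper edge-coloring of $K_5$ and hence at least five colors. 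Otherwise $C(a,b)$ is a fifth color outright. Without some such use of the edge $ab$ (or an argument replacing it), your dependent case does not close.
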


\begin{proof}
Lemma~\ref{odd} implies that neither color between $\{a,b\}$ and $\{c,d,e\}$ can be repeated on the edges spanned by $\{c,d,e\}$. Therefore, if $\{a,b\}$ is linearly independent it follows from Lemma~\ref{main} that $\{a,b,c,d,e\}$ span at least 5 colors.

Otherwise, $b = \lambda a$ for some $\lambda \in \mathbb{F}_q$. If $C(a,b)$ repeats one of the colors from the edges spanned by $\{c,d,e\}$, then this gives us the configuration forbidden by Lemma~\ref{forbidden}. If $C(a,b) = C(a,c)$ or $C(a,b)=C(b,c)$, then all five vectors belong to a one-dimensional linear subspace spanned by $a$ which must be properly edge-colored by Lemma~\ref{collinear}. Therefore, the set of vertices $\{a,b,c,d,e\}$ spans at least 5 colors.
\end{proof}

This immediately shows that the first configuration will not appear under the combined coloring.

\begin{corollary}
Let $a,b,c,d,e \in V$ be five distinct vertices. It cannot be the case that $C(a,b)=C(a,c)=C(a,d)=C(a,e)$, $C(b,c) = C(b,d)=C(b,e)$, and $C(c,d)=C(c,e)$ as in Figure~\ref{fig:E}.
\end{corollary}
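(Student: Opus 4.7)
The plan is to invoke the preceding lemma directly. The hypotheses of that lemma are $C(a,c)=C(a,d)=C(a,e)$, $C(b,c)=C(b,d)=C(b,e)$, and $C(a,c)\neq C(b,c)$. The first two equalities are part of the very configuration we are trying to forbid, so only the third condition requires a short verification. I would observe that if instead $C(a,c)=C(b,c)$ held, then combined with the assumed equality $C(a,b)=C(a,c)$ the triangle on $\{a,b,c\}$ would be monochromatic, which Lemma~\ref{odd} already rules out. Hence the lemma does apply.

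Once the lemma is applied, we conclude that the five vertices $\{a,b,c,d,e\}$ together span at least five distinct colors under $C$. The final step is simply to count how many colors the assumed configuration can actually use. Among the ten edges of $K_5$, four of them ($ab, ac, ad, ae$) share one color, three of them ($bc, bd, be$) share a second color, and two of them ($cd, ce$) share a third color; the only remaining edge is $de$, which contributes at most one additional color. So the configuration uses at most four distinct colors in total, directly contradicting the lemma's lower bound of five. Therefore this configuration cannot occur.

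The main obstacle — to the extent that there is one — is not in this corollary itself but in the preceding lemma, where the actual structural work on the MIP coloring already happened. The only genuinely new step here is the short argument ruling out the degenerate case $C(a,c)=C(b,c)$, after which the result follows by a straightforward pigeonhole count on the ten edges versus the four forced color classes.
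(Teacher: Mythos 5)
Your proof is correct and follows the same route as the paper, which treats this corollary as an immediate consequence of the preceding lemma: the configuration uses at most four colors (Black, Red, Green, plus whatever $de$ gets), contradicting the lemma's guarantee of five. Your explicit check of the degenerate case $C(a,c)=C(b,c)$ via Lemma~\ref{odd} is a small, harmless addition that the paper leaves implicit.
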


The second configuration also will not appear under the combined coloring.

\begin{lemma}
Let $a,b,c,d,e \in V$ be five distinct vertices. It cannot be the case that \[C(a,c)=C(a,d)=C(a,e) = C(b,c) = C(b,d)=C(b,e),\] and $C(c,d)=C(c,e)$ as in Figure~\ref{fig:F}.
\end{lemma}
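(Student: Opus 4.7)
The plan is to split on whether $\{a,b\}$ is linearly independent. From the hypothesis we have $\chi_1(a,c) = \chi_1(a,d) = \chi_1(a,e)$, $\chi_1(b,c) = \chi_1(b,d) = \chi_1(b,e)$, $\chi_1(a,c) = \chi_1(b,c)$, and $\chi_1(c,d) = \chi_1(c,e)$.

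If $\{a,b\}$ is linearly independent, then Lemma~\ref{main} applies and tells us that the edges spanned by $\{c,d,e\}$ use three distinct $\chi_1$-colors. In particular $\chi_1(c,d) \neq \chi_1(c,e)$, contradicting the hypothesis.

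If instead $\{a,b\}$ is linearly dependent, then since $a,b \in (\mathbb{F}_q^*)^3$ are distinct we have $b = \lambda a$ for some $\lambda \in \mathbb{F}_q \setminus \{0,1\}$. I would then derive a contradiction from the single equation $\chi_1(a,c) = \chi_1(b,c)$. The $\delta$-coordinate matches automatically, since $\{a,c\}$ is linearly dependent if and only if $\{b,c\}$ is; hence the equation forces $T := T(a,c) = T(b,c)$ and $f_T(a,c) = f_T(b,c)$. A short case analysis on $T$ then finishes the proof. For $T = \text{DOT}$, $f_T = x \cdot y$ gives $a \cdot c = \lambda(a \cdot c)$, so $a \cdot c = 0$, contradicting the definition of DOT. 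For $T \in \{\text{ZERO},\text{UP}_1,\text{DOWN}_1\}$, $f_T = x_1 + y_1$ gives $(\lambda - 1)a_1 = 0$, which is impossible since $a_1 \in \mathbb{F}_q^*$ and $\lambda \neq 1$. For $T \in \{\text{UP}_2,\text{DOWN}_2\}$, the defining condition $x_1 = y_1$ applied to both $(a,c)$ and $(b,c)$ yields $a_1 = c_1 = b_1 = \lambda a_1$, again impossible.

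I do not expect any serious obstacle here: all of the real structural work is already packaged into Lemma~\ref{main}, and the linearly dependent case is a mechanical consequence of the fact that scaling by any $\lambda \neq 1$ alters every coordinate of a vector in $(\mathbb{F}_q^*)^3$, and any nonzero inner product with $c$, by a nonzero amount; the only thing to be careful about is ensuring the case split on $T$ is exhaustive and that in each case the defining conditions of $T$ are strong enough to rule out the equality $f_T(a,c) = f_T(b,c)$.
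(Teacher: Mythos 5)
Your proposal is correct and follows essentially the same route as the paper: Lemma~\ref{main} disposes of the case where $\{a,b\}$ is linearly independent (since $C(c,d)=C(c,e)$ would contradict the three distinct colors on $\{c,d,e\}$), and when $b=\lambda a$ the equality $\chi_1(a,c)=\chi_1(b,c)$ is refuted by a case split on the type $T$, with DOT giving $a\cdot c=0$ and the non-DOT types forcing $a_1=\lambda a_1$. Your treatment of the non-DOT types is just a slightly more explicit version of the paper's single line, so there is nothing to add.
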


\begin{proof}
By Lemma~\ref{main}, this can happen only if there exists some $\lambda \in \mathbb{F}_q$ such that $b = \lambda a$. In this case, \[\chi_1(a,c)=\chi_1(a,d)=\chi_1(a,e)=\chi_1(\lambda a,c) = \chi_1(\lambda a,d) = \chi_1(\lambda a,e).\] If this color is in DOT, then $c \cdot a = c \cdot \lambda a$. So either $\lambda=1$, a contradiction, or $c \cdot a = 0$, a contradiction that the color is in DOT. If the color is not in DOT, then it must be the case that $a_1 = \lambda a_1$. Since $a_1 \neq 0$, then this forces $\lambda=1$, a contradiction.
\end{proof}

\subsection{A monochromatic neighborhood that contains a monochromatic $C_4$}

Let $a,b,c,d,e \in V$ be five distinct vertices such that \[C(a,b)=C(a,c)=C(a,d)=C(a,e)=\text{Black},\] and let \[C(b,c)=C(c,d)=C(d,e)=C(e,b)=\text{Red}\] as shown in Figure~\ref{fig:G}. By Lemma~\ref{order} we know that either $b,d < c,e$ or $c,e > b,d$. Similarly, we know that either $a < b,c,d,e$ or $b,c,d,e < a$. So without loss of generality we can say that either $a<b,d<c,e$ or $b,d<c,e<a$. In either case, \[S(b-a_b,U_{a,T}) = S(c-a_c,U_{a,T})\] where $T=T(a,b)=T(a,c)$. 

By Lemma~\ref{mainsub} we know that one of the following three cases must be true:
\begin{enumerate}
\item $\text{Black} \in \text{DOT}$,
\item $\text{Black} \in \text{UP}_1$ such that $b,d<c,e<a$, or
\item $\text{Black} \in \text{DOWN}_1$ such that $a<b,d<c,e$.
\end{enumerate}
This abuses our notation slightly, but the meaning is hopefully clear. For example, $\text{Black} \in \text{DOT}$ means that the first component of the $\chi_1$ part of the color Black is DOT.

\subsubsection{$\text{Black} \in \text{DOT}$}

First, let's assume that $\text{Red} \in \text{DOT}$. Let the inner product part of color Black be $\alpha$ and the inner product part of Red be $\beta$. Note that if either Black or Red encodes linear independence, then $b$, $c$, $d$, and $e$ would all belong to the same one-dimensional linear subspace, a contradiction of Lemma~\ref{collinear}. Also, since $c-e$ satisfies the three linear equations, $a \cdot x =0$, $b \cdot x = 0$, and $d \cdot x = 0$, then $\{a,b,d\}$ cannot be linearly independent since then $c=e$, a contradiction. So there exist nonzero $\lambda_1,\lambda_2 \in \mathbb{F}_q$ such that $d = \lambda_1 a + \lambda_2 b$.

Note that $a \cdot a \neq 0$ since otherwise \[a \cdot d = a \cdot (\lambda_1 a + \lambda_2 b)\] implies that $\lambda_2=1$, and so we can take the inner product of both sides of $d = \lambda_1 a + b$ with $c$ to get that $\lambda_1 \alpha = 0$, a contradiction.

So $a_b = a_c = \alpha(a \cdot a)^{-1} a$, then \[d = \lambda_1' a_b + \lambda_2 b\] where $\lambda_1' = \lambda_1 \alpha^{-1} (a \cdot a).$ Taking the inner product of both side of this with $a$ gives that \[\alpha = (\lambda_1' + \lambda_2)\alpha.\] So it follows that $a_b$, $b$, and $d$ are affine dependent. By the same arguments we can conclude that $a_b$, $c$, and $e$ are also affine dependent.

Note that $(b - d) \cdot (c-e) = 0$. Therefore, $(b - a_b) \cdot (c - a_c)=0$. Since \[S\left(b-a_b, U_{a,\text{DOT}}\right) = S\left(c-a_c, U_{a,\text{DOT}}\right),\] then either $b-a_b$ or $c-a_c$ must be isotropic. If both are isotropic, then  Fact~\ref{funfact} implies that they belong to the same one-dimensional linear space. But if this is true, then $b \cdot b = \alpha^2 (a \cdot a)^{-1}$. Since $(b - a_b) \cdot (c - a_c)=0$ implies that $\beta = \alpha^2 (a \cdot a)^{-1}$, then we can conclude that \[b \cdot (b-c) = 0.\] This contradicts our assumption that $\text{Red} \in \text{DOT}$.

If only one of these vectors is isotropic, then the two-dimensional linear space orthogonal to it must be the same as $U_{a,\text{DOT}}$. Therefore, $a$ itself must be isotropic, which we have already shown is not true.

Now, assume that $\text{Red} \not \in \text{DOT}$, and note that this implies that $b_1 = d_1$ and $c_1=e_1$. If $\text{Red} \in \text{ZERO}$, then \[a \cdot (c-e) = b \cdot (c-e) = d \cdot (c-e) = 0.\] If $a,b,d$ are linearly independent, then $c=e$, a contradiction. If either $b$ or $d$ depends on $a$, then $\delta(a,x) = 0$ for $x = b,c,d,e$ which implies that all five vectors belong to a one-dimensional linear subspace spanned by $a$, contradicting Lemma~\ref{collinear}. If $d = \lambda b$ for some $\lambda \in \mathbb{F}_q$, then $b_1 = d_1 = \lambda b_1$. So either $b_1=0$ or $\lambda = 1$, both contradictions. So we must assume that $d = \lambda_1 a + \lambda_2 b$ for nonzero $\lambda_1,\lambda_2 \in \mathbb{F}_q$. But then
\begin{align*}
d \cdot c &= \lambda_1 (a \cdot c) + \lambda_2 (b \cdot c)\\
0 &= \lambda_1 (a \cdot c)
\end{align*}
Since $\lambda_1 \neq 0$, then $a \cdot c = 0$ which implies that $\text{Black} \not \in \text{DOT}$, a contradiction.

If $\text{Red} \in \text{UP}_2 \cup \text{DOWN}_2$, then $b_1=c_1=d_1=e_1$. So all four vectors $b,c,d,e$ satisfy the linear equations $a \cdot x = \alpha$ and $(1,0,0) \cdot x = b_1$. Therefore, $b$, $c$, $d$, and $e$ all belong to a one-dimensional affine subspace, a contradiction of Lemma~\ref{collinear}.

If $\text{Red} \in \text{UP}_1$, then \[b \cdot (b-c) = b \cdot (b-e) = d \cdot (d-c) = d \cdot (d-e) = 0\] since we assume that $b,d<c,e$. So \[ \left(  \begin{array}{ccc}
a_1 & a_2 & a_3\\
b_1 & b_2 & b_3\\
b_1 & d_2 & d_3 \end{array} \right) \left( \begin{array}{c}
0\\
c_2-e_2\\
c_3 - e_3 \end{array}  \right) = 0.\] Therefore, if any two of $(a_2,a_3)$, $(b_2,b_3)$, and $(d_2,d_3)$ are linearly independent as vectors in $\mathbb{F}_q^2$, then $c=e$, a contradiction. Hence, there must exist $\lambda_1, \lambda_2 \in \mathbb{F}_q$ such that $(a_2, a_3) = \lambda_1 (b_2,b_3)$ and $(d_2,d_3) = \lambda_2 ( b_2, b_3)$.

From the equations of the form $x \cdot (x-c) = 0$ for $x=b,d$ we get
\begin{align*}
b_1(b_1-c_1) +b_2^2 + b_3^2 - b_2c_2 - b_3c_3 &= 0\\
b_1(b_1-c_1) +\lambda_2^2b_2^2 + \lambda_2^2b_3^2 - \lambda_2 b_2c_2 - \lambda_2 b_3c_3 &= 0
\end{align*}
So it follows that \[c_3 = b_3^{-1} \left(b_1(b_1-c_1) + b_2^2 + b_3^2 - b_2c_2\right)\] which in turn gives that \[(1-\lambda_2)b_1(b_1-c_1) = \lambda_2(1-\lambda_2)(b_2^2+b_3^2).\] Since $\lambda_2 \neq 1$, then \[b_1(b_1-c_1) = \lambda_2(b_2^2+b_3^2).\] Since $b_1 \neq 0$ and $b_1 \neq c_1$, then $b_2^2+b_3^2 \neq 0$.

Now, since $\text{Black} \in \text{DOT}$, we get
\begin{align*}
a \cdot b &= a \cdot d\\
a_1b_1 + \lambda_1 b_2^2 + \lambda_1 b_3^2 &= a_1 b_1 + \lambda_1 \lambda_2 b_2^2 + \lambda_1 \lambda_2 b_3^2\\
\lambda_1 (1 - \lambda_2) (b_2^2 + b_3^2) &= 0
\end{align*}
But this is a contradiction, since none of these three terms are zero. If $\text{Red} \in \text{DOWN}_1$, then we swap $b,d$ and $e,c$ in the previous argument to obtain the same contradiction.

\subsubsection{$\text{Black} \not\in \text{DOT}$}

In this case, either $b,d < c,e < a$ and $\text{Black} \in \text{UP}_1$, or $c,e > b,d > a$ and $\text{Black} \in \text{DOWN}_1$. In both cases, \[b \cdot (b-a) = c \cdot (c-a) = d \cdot (d-a) = e \cdot (e-a) = 0.\] Moreover, $b_1=c_1=d_1=e_1$, which implies that \[\text{Red} \in \text{ZERO} \cup \text{UP}_2 \cup \text{DOWN}_2 \cup \text{DOT}.\]

If $\text{Red} \in \text{ZERO}$, then \[b \cdot (c-e) = d \cdot (c-e) = 0.\] Therefore, \[\left( \begin{array}{cc}
b_2 & b_3\\
d_2 & d_3
\end{array} \right) \left( \begin{array}{c}
c_2 - e_2\\
c_3 - e_3
\end{array} \right) = 0.\] So $(d_2,d_3) = \gamma (b_2,b_3)$ for some $\gamma \in \mathbb{F}_q$, and $b \cdot c = d \cdot c$ gives \[b_1^2 +c_2 b_2 +c_3 b_3 = b_1^2 + \gamma c_2b_2 + \gamma c_3b_3.\] Thus, $(1-\gamma)(c_2b_2+c_3b_3) = 0$. Therefore, either $\gamma = 1$, a contradiction since $b \neq d$, or $c_2b_2+c_3b_3=0$, also a contradiction since this implies that $b \cdot c = b_1^2 \neq 0$.

If $\text{Red} \in \text{UP}_2 \cup \text{DOWN}_2$, then we have $b_2=d_2$, $c_2=e_2$, and either $b \cdot (b-c)=b\cdot(b-e)=0$ or $c \cdot (c-b)=c\cdot(c-d)=0$. In the first case, $b \cdot (e-c) = 0$ so $b_3(e_3-c_3) = 0$. So either $b_3=0$ or $e_3=c_3$, both contradictions. Similarly, in the second case, $c \cdot (b-d)=0$ means that $c_3(b_3-d_3)=0$, which gives the same contradictions.

Finally, if $\text{Red} \in \text{DOT}$, then $b \cdot (c-e) = 0$ and $d \cdot (c-e) = 0$. So \[ \left( \begin{array}{cc}
b_2 & b_3\\
d_2 & d_3
\end{array} \right) \left( \begin{array}{c}
c_2-e_2\\
c_3-e_3
\end{array} \right) = 0.\] Therefore, either $c=e$, a contradiction, or $(d_2,d_3) = \lambda_1 (b_2,b_3)$ for some nonzero $\lambda \in \mathbb{F}_q$.

If $\beta$ is the inner product represented by Red, then we get that
\begin{align*}
\beta &= b_1^2 + b_2c_2 + b_3c_3\\
\beta &= b_1^2 + \lambda_1b_2c_2 + \lambda_1b_3c_3\\
\end{align*}
So, \[(1-\lambda_1)(b_2c_2+b_3c_3) = 0.\] Therefore, either $\lambda_1=1$ or $b_2c_2 + b_3c_3=0$. If $\lambda_1=1$, then $b=d$, a contradiction. So we must assume that $b_2c_2 + b_3c_3=0$. But then \[(b - (b_1,0,0)) \cdot (c-(b_1,0,0)) = 0.\] Since we know that \[S(b-(b_1,0,0),U_{a,T})=S(c-(b_1,0,0),U_{a,T}),\] then it must be that either $(0,b_2,b_3)$ or $(0,c_2,c_3)$ is isotropic. If these vectors are linearly independent, then they cannot both be isotropic by Fact~\ref{funfact}.

Moreover, the isotropic vector is orthogonal to the linear subspace $U_{a,T}$. Hence, it must be linearly dependent on $(1,0,0)$, a contradiction. Therefore, $(0,b_2,b_3)$ and $(0,c_2,c_3)$ belong to a totally isotropic one-dimensional linear subspace. So $b_2^2 + b_3^2 = 0$ and $c = (b_1,\lambda_2 b_2, \lambda_2 b_3)$ for some $\lambda_2 \in \mathbb{F}_q$. But then $b \cdot (b-c) = 0$, a contradiction of the assumption that $\text{Red} \in \text{DOT}$.

\section{Conclusion}

This construction provides additional evidence that a general strategy of combining a $(p,p-1)$-coloring with a $\mathbb{F}_q^{p-2} \rightarrow \mathbb{F}_q$ algebraic coloring might show that $f(n,p,p) \leq n^{1/(p-2) + o(1)}$. However, both Mubayi's proof for his $(4,4)$-coloring \cite{mubayi2004} and our proof for the $(5,5)$-coloring require case-checking. Already, in this paper we found it far easier to appeal to an algorithm rather than present a logical elimination of all cases, but this problem will quickly become intractable as $p$ increases. Some general principles will need to be identified before we can demonstrate that the analogous constructions work for all $p$.

Even if this type of construction were to demonstrate such a bound in general, a subpolynomial yet significant gap between the lower and upper bounds persists even for $p=4$. It would be nice to find a way to avoid including the CFLS coloring and tighten  the upper bound, or, perhaps more interestingly, show that the lower bound can be increased.

\section*{Acknowledgements}

This work was supported in part by NSF-DMS Grants 1604458, 1604773, 1604697 and 1603823, ``Collaborative Research: Rocky Mountain - Great Plains Graduate Research Workshops in Combinatorics." Thank you to Sam Cole and Florian Pfender for all of the brainstorming they did with us when we first started thinking about this problem, to Bernard Lidick\'{y} and his computer for narrowing the problem cases down for us early on, to Dhruv Mubayi for introducing one of us to this problem and for chatting about it from time to time, and to everyone involved with the GRWC for creating the time and space for new researchers to collaborate on problems like this one.

\appendix
 
\section{Algorithm for reducing cases}
\label{script}
The following algorithm is not difficult to verify, so we present it here without proof. The specific implementation we rely on is a Python script that can be found (with comments) at \url{http://homepages.math.uic.edu/~acamer4/EdgeColors.py}.

Suppose we want to find every edge-coloring, up to isomorphism, of $K_n$ that uses at most $m$ colors and does not contain a copy of any $F \in \mathcal{F}$, a list of edge-colored complete graphs on $n$ or fewer vertices. The algorithm takes $\mathcal{F}$, $n$, and $m$ as input and returns a list $\mathcal{R}$ of edge-colorings of $K_n$ satisfying these requirements.

For each $k=3,\ldots,n$, the algorithm creates a list $L_k$ of acceptable edge-colorings of $K_k$ by adding a new vertex to each $K_{k-1}$ listed in $L_{k-1}$ (where $L_2$ is the list of exactly one $K_2$ with its single edge given color $1$), and then coloring the $k-1$ new edges in all possible ways from the color set $[m]$. For each graph in $L_{k-1}$ and each way to color the new edges, we test the resulting graph to see if it contains any of the forbidden edge-colorings. If it does, then we move on. If not, then we test it against the new list $L_k$ to see if it is isomorphic to any of the colorings of $K_k$ already on the list. If it is, then we move on. Otherwise, we add it to the list $L_k$. The algorithm terminates when it has tested all colorings of $K_n$.\\

\begin{algorithm}[H]
\SetAlgoLined
\KwData{number of vertices $n$; maximum number of colors $m$; list of forbidden colorings $\mathcal{F}$}
 
initialize $L_2$ as list containing one $K_2$ with its edges colored 1\;
\For{$k=3,\ldots,n$}{
initialize empty list $L_k$\;
\For{$H \in L_{k-1}$}{
\For{each function $f:[k-1] \rightarrow [m]$}{
let $G$ be $K_{k}$ with edge-colors same as $H$ on the first $k-1$ vertices and color $f(i)$ on edge $ki$ for $i=1,\ldots,k-1$\;
\If{$G$ contains no element of $\mathcal{F}$ and is isomorphic to no element of $L_k$}{add $G$ to the list $L_k$}
}
}
}
\Return{$L_n$}
 \caption{List all edge-colorings with no forbidden subcoloring}
\end{algorithm}

\bibliography{ppconstruction}
\bibliographystyle{plain}

\end{document}